\documentclass{article}
\usepackage{amssymb}

\usepackage{graphicx}
\usepackage{amsmath}
\renewcommand{\boxed}[1]{#1}
\usepackage{color}
\usepackage{float}
\usepackage{array}
\usepackage{comment}
\newtheorem{theorem}{Theorem}[section]

\newtheorem{definition}[theorem]{Definition}

\newtheorem{lemma}[theorem]{Lemme}

\newtheorem{proposition}[theorem]{Proposition}
\newtheorem{remark}[theorem]{Remark}

\newenvironment{proof}[1][Proof]{\textbf{#1.} }{\ \rule{0.5em}{0.5em}}
\newcommand{\cqfd}

\title{A Riemannian Extension of a Quadrature Surface Free Boundary Problem: Stability and Optimality}
\author{Ababacar Sadikhe DJITE$^{1,\,}$\footnote{ababacarsadikhe.djite@ucad.edu.sn} ,  Diaraf SECK$^{1,\,}$ \footnote{diaraf.seck@ucad.edu.sn}\\\\
$^{1}$ Laboratoire des Math\'ematiques de la D\'ecision et \\
d'Analyse Num\'erique, BP 16889, Dakar Fann, S\'enegal\\
Ecole Doctorale de Math\'ematiques et Informatique U.C.A.D. Dakar,  S\'en\'egal.}

\begin{document}
\maketitle
\begin{abstract}
We study a quadrature surface free boundary problem on a smooth
compact finite-dimensional Riemannian manifold $(M,g)$. The problem is
formulated as a shape optimization problem involving a Dirichlet
problem for the Laplace--Beltrami operator, with a geometric condition
on the free boundary.

Under suitable uniform geometric assumptions on the admissible class,
we establish its compactness and prove the stability of the
corresponding Dirichlet problems, including strong convergence of the
associated states in $H_0^1(M)$. We derive the first-order optimality
condition for the associated shape functional. Finally, we establish a
Riemannian comparison principle for the second fundamental forms and
the mean curvatures of tangent boundaries at a contact point. These
results provide a rigorous extension of the quadrature surface free
boundary framework from the Euclidean setting to compact Riemannian
manifolds.
\end{abstract}
{\bf Keywords:} Free boundary, quadrature surface, Riemannian manifold, stability, shape optimization \\
{\bf Mathematical classification subject}: 49Q10, 53B20
\section{Introduction}
\label{sec:introduction}

Free boundary problems arise in many areas of mathematical analysis and
mathematical physics, where the domain on which a partial differential
equation is satisfied is itself part of the unknown. A typical feature
of such problems is the coupling between an elliptic state equation in
an unknown domain and a geometric condition prescribed on its boundary.
This interaction between the analytical properties of the state
equation and the geometry of the free boundary makes these problems
particularly challenging; see, for instance,
\cite{He,HP,sec}.

In this paper, we consider an interior free boundary problem of
quadrature-surface type and extend its formulation from the Euclidean
setting to the framework of compact finite-dimensional Riemannian
manifolds. In the Euclidean setting, the problem considered by
Barkatou, Seck and Ly~\cite{BLS2} consists in finding a bounded open set
$\Omega\subset\mathbb{R}^N$, containing a prescribed compact set $K$,
together with a function $u_\Omega\in H_0^1(\Omega)$ satisfying
\begin{equation}
\begin{cases}
-\Delta u_\Omega=f & \text{in }\Omega,\\
u_\Omega=0 & \text{on }\partial\Omega,
\end{cases}
\label{eq:euclidean-dirichlet-introduction}
\end{equation}
where $f$ is a prescribed non-negative function with compact support.
The free boundary is then characterized by a geometric condition
involving the mean curvature of $\partial\Omega$ and the normal
derivative of the state function. The authors formulate the problem as
a shape optimization problem and combine the existence of a minimizing
domain with shape differentiation and maximum-principle arguments to
derive a sufficient condition for the existence of a free boundary
solution; see~\cite{BLS2}.

The purpose of the present work is to develop an intrinsic analogue of
this problem on a compact finite-dimensional Riemannian manifold
$(M,g)$. A Riemannian formulation of a quadrature-surface free boundary
problem was previously considered in~\cite{DS1}. The present work
develops this viewpoint further by establishing a corresponding
geometric and variational framework for the stability and optimality
analysis of the problem on compact Riemannian manifolds.

Let $(M,g)$ be a smooth compact Riemannian manifold of dimension $n$,
without boundary. We consider a prescribed compact set
\[
K\subset M
\]
and seek a domain $\Omega\subset M$ containing $K$ together with a
state function $u_\Omega$ satisfying the Riemannian Dirichlet problem
\begin{equation}
\begin{cases}
-\Delta_g u_\Omega=f & \text{in }\Omega,\\
u_\Omega=0 & \text{on }\partial\Omega,
\end{cases}
\label{eq:riemannian-dirichlet-introduction}
\end{equation}
where $\Delta_g$ denotes the Laplace--Beltrami operator associated with
the metric $g$. The free boundary is expected to satisfy a geometric
condition involving the Riemannian mean curvature
$H_{\partial\Omega}$ and the normal derivative of $u_\Omega$.

The passage to a general Riemannian manifold requires an intrinsic formulation, since Euclidean affine and convex structures are no longer available. We therefore formulate the geometric assumptions in terms of $g$ and a fixed reference domain containing $K$, in the spirit of~\cite{Schu}.

The variational structure of the problem plays a central role in our
analysis. Shape optimization provides a natural framework for problems
in which the domain itself is an optimization variable; see, for
example, \cite{DZ,HP,sozo}. For an admissible domain $\Omega$, we
consider the energy functional
\begin{equation}
J_g(\Omega)
=
\frac12
\int_\Omega
|\nabla_g u_\Omega|_g^2\,dV_g
-
\int_\Omega f u_\Omega\,dV_g.
\label{eq:shape-functional-introduction}
\end{equation}
The dependence of $u_\Omega$ on the domain makes $J_g$ a genuine shape
functional. The analysis of its variations therefore requires both the
differentiation of the state equation and the geometric description of
deformations of the boundary.

Once the existence of a minimizing domain has been established, a
second objective is to identify its first-order optimality condition.
Shape differentiation provides the natural framework for this analysis;
see \cite{Da1,DZ,HP}. Let $X$ be a sufficiently regular vector field on
$M$ and let $(\Phi_t)_t$ denote its local flow. The corresponding
variation of a domain is given by
\[
\Omega_t=\Phi_t(\Omega).
\]
The first shape derivative of $J_g$ in the direction $X$ is then
formally written as
\[
dJ_g(\Omega)[X]
=
\left.
\frac{d}{dt}
J_g(\Omega_t)
\right|_{t=0}.
\]
In the Riemannian setting, the resulting expression involves the
normal component
\[
g(X,\nu_\Omega)
\]
of the deformation field and the intrinsic geometric quantities
associated with $\partial\Omega$.

A further difficulty comes from the geometric constraints imposed on
the admissible domains. In particular, not every normal deformation of
a minimizing domain necessarily preserves admissibility. To overcome
this difficulty, we work with an admissible class defined relative to a
fixed reference domain $C$ satisfying suitable uniform geometric
assumptions and containing the prescribed set $K$. The uniform
geometric control imposed on the admissible class is also essential for
the compactness of minimizing sequences and for the stability of the
associated Dirichlet problems.

The comparison argument at a contact point is another important
ingredient of the analysis. If an admissible minimizing domain
$\Omega^\star$ touches the reference domain $C$, the inclusion
\[
C\subset\Omega^\star
\]
and the tangency of their boundaries allow one to compare their
outward unit normals and their second fundamental forms. With the
convention
\[
\mathrm{II}_{\partial\Omega}(X,Y)
=
g(\nabla_X\nu_\Omega,Y),
\]
the corresponding mean-curvature comparison takes the form
\[
H_{\partial\Omega^\star}(x_0)
\leq
H_{\partial C}(x_0)
\]
at a contact point $x_0$. Such comparison arguments are closely related
to classical methods in free boundary problems; see
\cite{BLS,BLS2,He,sec}.

The main analytical difficulty is to establish the stability of the state equation under domain convergence. The uniform geometric control allows suitable diffeomorphic identifications of nearby domains and yields strong convergence of the zero extensions in $H^1(M)$, hence convergence of the Dirichlet energies.

Our main result can then be summarized as follows. Under the geometric
and analytic assumptions introduced below, the shape functional
$J_g$ admits a minimizer $\Omega^\star$ in the admissible class.
Moreover, the minimizer satisfies a first-order optimality condition
expressed in terms of the normal derivative of the state function and
the geometry of the free boundary. In the presence of the reference
domain $C$, the comparison principle yields a sufficient geometric
condition ensuring that the free boundary condition is satisfied.

The organization of the paper is as follows. In Section~2, we
introduce the geometric and analytical setting, define the admissible
class of domains, and collect the preliminary results concerning the
Riemannian Dirichlet problem and its stability. Section~3 contains the
main existence and regularity results for the shape optimization
problem. In Section~4, we derive the first-order optimality condition
using shape differentiation. Section~5 is devoted to the proofs of the
main theorems, including the compactness and stability arguments and
the comparison principle at contact points. Finally, Section~6 presents
examples illustrating the geometric framework and the resulting
optimality condition.
\section{Notations and preliminary results}
\label{sec:preliminaries}

Throughout the paper, $(M^n,g)$ denotes a smooth, compact, connected
Riemannian manifold without boundary, of dimension $n\geq 2$. We denote
by $d_g$ the Riemannian distance induced by $g$, by $dV_g$ the Riemannian
volume measure, and by $dS_g$ the induced Riemannian surface measure on
a sufficiently regular hypersurface.

Since $M$ is compact, its injectivity radius is strictly positive and
all Sobolev spaces on $M$ are well defined. However, the compactness of
$M$ alone does not provide compactness of a family of domains. This is
the reason why we introduce below a geometrically controlled admissible
class.

\subsection{The geometric setting and the data}
\label{subsec:geometric-setting}

Let
\[
K\subset M
\]
be a nonempty compact set. We fix a reference domain
\[
C\subset M
\]
such that
\begin{equation}
K\subset C.
\label{eq:K-compact-C}
\end{equation}

We assume throughout that
\[
\partial C\in C^{2,\alpha}
\]
for some
\[
0<\alpha<1.
\]

Moreover, we assume that $C$ possesses a uniform tubular neighborhood:
there exists $\rho_C>0$ such that the normal exponential map
\begin{equation}
\mathcal E_C:
\partial C\times(-\rho_C,\rho_C)
\longrightarrow M,
\qquad
\mathcal E_C(x,t)
=
\exp_x(t\nu_C(x)),
\label{eq:normal-exp-C}
\end{equation}
is a diffeomorphism onto its image, where $\nu_C$ denotes the outward
unit normal to $\partial C$.

This assumption guarantees, in particular, that the geometry of
$\partial C$ is uniformly controlled in a tubular neighborhood.

Let
\[
p>n
\]
and assume that
\begin{equation}
f\in L^p(M),
\qquad
f\geq0,
\qquad
f\not\equiv0,
\qquad
\operatorname{supp}f\subset K.
\label{eq:f-assumptions}
\end{equation}

For the shape differentiability results considered later, we shall
additionally assume that $f$ has the regularity required to guarantee
the corresponding boundary regularity of the state variable. In
particular, one may assume
\begin{equation}
f\in C^{0,\alpha}(M).
\label{eq:f-holder}
\end{equation}

The distinction between \eqref{eq:f-assumptions}, which is sufficient
for the weak Dirichlet problem and the existence theory, and
\eqref{eq:f-holder}, which is used for the classical shape derivative,
will be maintained throughout the paper.

\subsection{Sobolev spaces and the Laplace--Beltrami operator}
\label{subsec:sobolev-laplace}

For an open set $\Omega\subset M$, we denote by
\[
H^1(\Omega)=W^{1,2}(\Omega)
\]
the usual Sobolev space associated with the metric $g$, and by
\[
H_0^1(\Omega)
\]
the closure of $C_c^\infty(\Omega)$ with respect to the $H^1$-norm.

For $u\in H^1(\Omega)$, the Riemannian gradient $\nabla_g u$ is defined
by
\[
g(\nabla_g u,X)=du(X)
\]
for every smooth vector field $X$. We write
\[
|\nabla_g u|_g^2
=
g(\nabla_g u,\nabla_g u).
\]

The Laplace--Beltrami operator is defined by
\[
\Delta_g u
=
\operatorname{div}_g(\nabla_g u).
\]

In local coordinates $(x^1,\ldots,x^n)$, if
\[
g=(g_{ij}),
\qquad
(g^{ij})=(g_{ij})^{-1},
\qquad
|g|=\det(g_{ij}),
\]
then
\begin{equation}
\Delta_g u
=
\frac{1}{\sqrt{|g|}}
\partial_i
\left(
\sqrt{|g|}\,g^{ij}\partial_j u
\right).
\label{eq:laplace-beltrami-local}
\end{equation}

The weak formulation of the Dirichlet problem
\begin{equation}
\begin{cases}
-\Delta_g u_\Omega=f & \text{in }\Omega,\\
u_\Omega=0 & \text{on }\partial\Omega
\end{cases}
\label{eq:dirichlet-problem-section2}
\end{equation}
is
\begin{equation}
\int_\Omega
g(\nabla_g u_\Omega,\nabla_g\varphi)\,dV_g
=
\int_\Omega f\varphi\,dV_g
\qquad
\forall\varphi\in H_0^1(\Omega).
\label{eq:weak-dirichlet}
\end{equation}

By the Lax--Milgram theorem, problem
\eqref{eq:dirichlet-problem-section2} has a unique weak solution
\[
u_\Omega\in H_0^1(\Omega).
\]

Whenever convenient, we extend $u_\Omega$ by zero outside $\Omega$ and
regard it as an element of $H_0^1(M)$.

\subsection{Regularity of the state equation}
\label{subsec:state-regularity}

Let $\Omega\subset M$ be a domain with boundary of class
$C^{1,1}$. If $f\in L^p(\Omega)$ with $p>n$, standard elliptic
regularity yields
\[
u_\Omega\in W^{2,p}(\Omega).
\]

By the Sobolev embedding theorem,
\[
W^{2,p}(\Omega)
\hookrightarrow
C^{1,\beta}(\overline{\Omega}),
\qquad
\beta=1-\frac np>0.
\]

Consequently,
\begin{equation}
u_\Omega\in C^{1,\beta}(\overline{\Omega}).
\label{eq:u-C1beta}
\end{equation}

If, in addition,
\[
f\in C^{0,\alpha}(\overline{\Omega})
\]
and
\[
\partial\Omega\in C^{2,\alpha},
\]
then elliptic Schauder estimates give
\begin{equation}
u_\Omega\in C^{2,\alpha}(\overline{\Omega}).
\label{eq:u-C2alpha}
\end{equation}

The latter regularity will be used in the shape derivative and in the
pointwise boundary arguments.

\subsection{The Riemannian $RC$-GNP condition}
\label{subsec:RC-GNP}

The $C$-GNP condition used in the Euclidean problem provides, among
other properties, geometric control of the boundary, compactness of
the admissible class, and stability of the Dirichlet problem under
domain convergence.

In the Riemannian setting, we do not impose a Euclidean convexity
condition. Instead, we introduce an intrinsic quantitative condition
which plays the same role.

\begin{definition}[Riemannian $RC$-GNP condition]
\label{def:RC-GNP}
Let $C\subset M$ be the fixed reference domain satisfying
\eqref{eq:K-compact-C}. We say that a domain $\Omega\subset M$
satisfies the \emph{Riemannian $RC$-GNP condition relative to $C$} if
the following properties hold:

\begin{enumerate}
\item
\begin{equation}
C\subset\Omega;
\label{eq:RCGNP-inclusion}
\end{equation}

\item
\[
\partial\Omega\in C^{2,\alpha};
\]

\item there exist constants
\[
A>0,
\qquad
c_0>0,
\qquad
\rho_0>0,
\]
independent of $\Omega$, and a function
\[
h_\Omega\in C^{2,\alpha}(M)
\]
such that
\begin{equation}
\Omega=\{x\in M:h_\Omega(x)>0\},
\qquad
\partial\Omega=\{x\in M:h_\Omega(x)=0\},
\label{eq:defining-function}
\end{equation}
with
\begin{equation}
\|h_\Omega\|_{C^{2,\alpha}(M)}
\leq A,
\label{eq:uniform-C2alpha}
\end{equation}
and
\begin{equation}
|\nabla_g h_\Omega|_g
\geq c_0
\qquad
\text{whenever }|h_\Omega|\leq\rho_0;
\label{eq:uniform-gradient}
\end{equation}

\item the normal exponential map of $\partial\Omega$ is uniformly
nondegenerate: there exists $\rho_1>0$, independent of $\Omega$, such
that
\begin{equation}
\mathcal E_\Omega:
\partial\Omega\times(-\rho_1,\rho_1)
\longrightarrow M,
\qquad
\mathcal E_\Omega(x,t)
=
\exp_x(t\nu_\Omega(x)),
\label{eq:normal-exp-Omega}
\end{equation}
is a diffeomorphism onto its image.
\end{enumerate}
\end{definition}
\subsection{The admissible class}
\label{subsec:admissible-class}

We can now define the admissible family used throughout the paper.
\begin{definition}\label{def:admissible-class}
The admissible class associated with the reference domain $C$ is
\[
\mathcal A_C(M)
=
\left\{
\Omega\subset M:
\begin{array}{l}
\Omega \text{ is a connected open set},\\
C\subset\Omega,\ \partial\Omega\in C^{2,\alpha},\\
\Omega \text{ satisfies the uniform RC-GNP conditions}
\end{array}
\right\}.
\]
\end{definition}
In particular, every $\Omega\in\mathcal A_C(M)$ satisfies
\begin{equation}
K\subset C\subset\Omega\subset M.
\label{eq:admissible-class}
\end{equation}

This inclusion is essential for the comparison arguments used later.
In particular, if $\Omega\in\mathcal A_C(M)$, then the restriction of
$u_\Omega$ to $C$ can be compared with the solution $u_C$ of the
Dirichlet problem on $C$.

\subsection{Convergence of admissible domains}
\label{subsec:domain-convergence}

For compact subsets $A,B\subset M$, we define the Hausdorff distance
by
\begin{equation}
d_H(A,B)
=
\max
\left\{
\sup_{x\in A}d_g(x,B),
\sup_{y\in B}d_g(y,A)
\right\}.
\label{eq:hausdorff-distance}
\end{equation}

We say that a sequence of compact sets $A_j$ converges to $A$ in the
Hausdorff topology if
\[
d_H(A_j,A)\longrightarrow0.
\]

For open subsets, we shall also use the following compact convergence.

\begin{definition}
\label{def:compact-convergence}
Let $\Omega_j,\Omega\subset M$ be open sets. We say that
$\Omega_j$ converges compactly to $\Omega$, and write
\[
\Omega_j\xrightarrow{K}\Omega,
\]
if
\begin{align}
K_1\subset\Omega
&\quad\Longrightarrow\quad
K_1\subset\Omega_j
\quad\text{for all sufficiently large }j,
\label{eq:compact-conv-inside}
\\
K_2\subset M\setminus\overline{\Omega}
&\quad\Longrightarrow\quad
K_2\subset M\setminus\overline{\Omega_j}
\quad\text{for all sufficiently large }j.
\label{eq:compact-conv-outside}
\end{align}
\end{definition}

The use of
\[
K_2\subset M\setminus\overline{\Omega}
\]
rather than merely
\[
K_2\subset M\setminus\Omega
\]
is important, since points belonging to $\partial\Omega$ must not be
classified as points of the exterior.

\subsection{Compactness of the admissible class}
\label{subsec:compactness-admissible}

The uniform $C^{2,\alpha}$ bounds in Definition~\ref{def:RC-GNP}
provide the compactness that is not supplied by the compactness of $M$
alone.
\begin{theorem}[Compactness of the admissible class]
\label{thm:compactness-admissible}
Let $(M,g)$ be a compact smooth Riemannian manifold without boundary,
and let $(\Omega_j)_{j\in\mathbb N}\subset\mathcal A_C(M)$.
Assume that the admissible class satisfies the uniform RC--GNP
conditions of Definition~\ref{def:RC-GNP}. Then, up to extraction of
a subsequence, there exists a domain
$\Omega\in\mathcal A_C(M)$ such that
\begin{equation}
d_H(\overline{\Omega_j},\overline{\Omega})\longrightarrow0,
\label{eq:compactness-Hausdorff}
\end{equation}
\begin{equation}
\Omega_j\overset{K}{\longrightarrow}\Omega,
\label{eq:compactness-K}
\end{equation}
and
\begin{equation}
\chi_{\Omega_j}\longrightarrow\chi_\Omega
\qquad\text{strongly in }L^1(M,dV_g).
\label{eq:compactness-L1}
\end{equation}
\end{theorem}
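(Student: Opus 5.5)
The plan is to pass to the limit in the defining functions. Each $\Omega_j$ has, by Definition~\ref{def:RC-GNP}, a function $h_j:=h_{\Omega_j}$ with $\|h_j\|_{C^{2,\alpha}(M)}\le A$. Since $M$ is compact, the embedding $C^{2,\alpha}(M)\hookrightarrow C^{2,\beta}(M)$ is compact for $0<\beta<\alpha$ (Arzelà--Ascoli in finitely many charts). Hence, after extracting a subsequence, $h_j\to h$ in $C^{2,\beta}(M)$. Lower semicontinuity of the Hölder seminorm under uniform convergence gives $h\in C^{2,\alpha}(M)$ with $\|h\|_{C^{2,\alpha}}\le A$. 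We then set
\[
\Omega:=\{h>0\}.
\]
Passing to the limit pointwise in \eqref{eq:uniform-gradient} gives $|\nabla_g h|_g\ge c_0$ on $\{|h|<\rho_0\}$, and, by a limiting argument, on $\{|h|\le\rho_0\}$. By the implicit function theorem, $\{h=0\}$ is then a $C^{2,\alpha}$ hypersurface and equals $\partial\Omega$.

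Next we prove the convergences. For compact convergence, let $K_1\subset\Omega$ be compact. Then $\min_{K_1}h>0$, and uniform convergence gives $h_j>0$ on $K_1$ for large $j$, i.e. $K_1\subset\Omega_j$. Similarly, if $K_2\subset M\setminus\overline\Omega$ is compact, then $h<0$ on $K_2$, because $\{h\le 0\}\setminus\{h<0\}=\partial\Omega$ thanks to the nondegenerate gradient. Hence $\max_{K_2}h<0$ and $K_2\cap\overline{\Omega_j}=\emptyset$ for large $j$. This is exactly where the gradient bound \eqref{eq:uniform-gradient} is needed: it rules out $h$ vanishing on a set with interior.

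For the Hausdorff convergence, we use the gradient bound quantitatively. Points with $|h(x)|\le\varepsilon$, with $\varepsilon<\rho_0$, lie within distance $\varepsilon/c_0$ of $\{h=0\}$; this follows by following the gradient flow of $h$, along which $|h|$ changes at rate at least $c_0$. Once $\|h_j-h\|_\infty<\varepsilon$:
\begin{itemize}
\item every point of $\overline{\Omega_j}$ satisfies $h>-\varepsilon$, so it lies in $\overline\Omega$ or within $\varepsilon/c_0$ of it;
\item symmetrically, every point of $\overline\Omega$ is within $\varepsilon/c_0$ of $\overline{\Omega_j}$.
\end{itemize}
This yields \eqref{eq:compactness-Hausdorff}. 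For \eqref{eq:compactness-L1}, note that $\chi_{\Omega_j}\to\chi_\Omega$ pointwise off $\partial\Omega$ by the compact convergence. Moreover, $\partial\Omega$ is a $C^2$ hypersurface and therefore has $dV_g$-measure zero. Dominated convergence on the compact $M$ concludes.

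It remains to show $\Omega\in\mathcal A_C(M)$.
\begin{itemize}
\item \emph{Inclusion.} $C\subset\Omega_j$ gives $h_j>0$ on $C$, hence $h\ge0$ on $\overline C$. Points of $\overline C$ with $h=0$ lie on $\partial\Omega$, so $C\subset\Omega$: an interior point of $C$ cannot lie on $\partial\Omega$, since $h$ takes negative values near $\partial\Omega$ while $h\ge0$ on $C$.
\item \emph{Constants.} The bounds \eqref{eq:uniform-C2alpha} and \eqref{eq:uniform-gradient} hold with the same constants, as shown above.
\item \emph{Connectedness.} $\Omega$ is connected: if $\Omega=U_1\sqcup U_2$ with both nonempty, pick compact balls in each. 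The $\Omega_j$ are connected, and the tubular structure gives a uniform $C^{2,\alpha}$ graph description of $\partial\Omega_j$ near $\partial\Omega$. Hence for large $j$, $\Omega_j$ is a small normal-graph perturbation of $\Omega$, and is therefore diffeomorphic to it, so $\Omega$ is connected.
\item \emph{Tubular condition.} We must show $\mathcal E_\Omega$ is a diffeomorphism on $\partial\Omega\times(-\rho_1,\rho_1)$. Injectivity and nondegeneracy of $\mathcal E_{\Omega_j}$ pass to the limit because $\nu_{\Omega_j}=\nabla h_j/|\nabla h_j|$ converges in $C^{1,\beta}$ to $\nu_\Omega$. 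The differential of $\mathcal E_\Omega$ on the open strip is a limit of invertible maps, and its invertibility follows from the uniform focal-distance bound. Injectivity follows since a collision for the limit would produce near-collisions contradicting the uniform $\rho_1$. Strictly, one may need to shrink to $\rho_1'<\rho_1$ or use the closed strip argument; I would handle this by the standard characterization of $\rho_1$ as the normal injectivity radius, which is upper semicontinuous under $C^2$ convergence of hypersurfaces.
\end{itemize}
I expect this last step, preserving the uniform nondegeneracy of the normal exponential map with the same $\rho_1$, to be the main obstacle.
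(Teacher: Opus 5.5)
Your proposal follows the paper's route. You extract a $C^2$-convergent subsequence of the defining functions, set $\Omega=\{h>0\}$, pass the gradient bound to the limit, get compact convergence from uniform convergence, and get $L^1$ convergence by dominated convergence off the null set $\partial\Omega$.

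Several sub-steps are done more carefully than in the paper:
\begin{itemize}
\item Your gradient-flow estimate says that points with $|h|\le\varepsilon<\rho_0$ lie within $\varepsilon/c_0$ of $\{h=0\}$, and the same holds for each $h_j$ with the same constants. This gives $d_H(\overline{\Omega_j},\overline\Omega)\le\varepsilon/c_0$ directly. The paper only proves $d_H(\partial\Omega_j,\partial\Omega)\to0$, by contradiction and the intermediate value theorem, and then asserts the statement for closures.
\item Your proof of $C\subset\Omega$ is genuine: a point of the open set $C$ where $h=0$ would force $h<0$ somewhere in $C$. The paper instead appeals to unspecified separation assumptions.
\item You treat connectedness, which the paper omits.
\item For the tubular condition, the paper allows itself to shrink $\rho_1$, which by itself only places $\Omega$ in a class with a smaller radius. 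You are right to insist on keeping $\rho_1$.
\end{itemize}

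Your ``near-collisions'' sentence does not by itself give injectivity, but the semicontinuity you invoke can be made concrete. The map $\mathcal E_\Sigma$ is a diffeomorphism onto its image on $\Sigma\times(-\rho_1,\rho_1)$ if and only if $d_g(\exp_x(t\nu(x)),\Sigma)=|t|$ for all $x\in\Sigma$ and $|t|<\rho_1$. One direction follows by taking a nearest point on $\Sigma$. The other holds because a collision or a focal point at some $|t|<\rho_1$ makes the normal geodesic non-minimizing slightly beyond $|t|$. This closed condition passes to the limit using $x_j\to x$, $\nu_{\Omega_j}(x_j)\to\nu_\Omega(x)$ and $d_H(\partial\Omega_j,\partial\Omega)\to0$.

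One step is wrong as written: extending the gradient bound to the closed set $\{|h|\le\rho_0\}$ ``by a limiting argument''. For instance, take $h_j=h+1/j$, where
\begin{itemize}
\item $\|h\|_{C^{2,\alpha}(M)}\le A-1$,
\item $|\nabla_g h|_g\ge c_0$ on $\{-\rho_0-1\le h<\rho_0\}$,
\item $h$ has a local minimum point $x$ with $h(x)=\rho_0$, hence $\nabla_g h(x)=0$.
\end{itemize}
Each $h_j$ satisfies \eqref{eq:uniform-gradient}, since $\{|h_j|\le\rho_0\}=\{-\rho_0-1/j\le h\le\rho_0-1/j\}$, but $h$ violates it at $x$.

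Only the open version on $\{|h|<\rho_0\}$ survives the limit, and that is also all the paper obtains. Your convergence statements are unaffected, since they use only the open version. However, membership $\Omega\in\mathcal A_C(M)$ is not established with this $h$. To close the gap, either read the nondegeneracy condition on $\{|h_\Omega|<\rho_0\}$, in which case the class is closed by exactly your argument, or construct a different defining function for $\Omega$.
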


\begin{proof}
For every $j\in\mathbb N$, let
\[
h_j:=h_{\Omega_j}\in C^{2,\alpha}(M)
\]
be a defining function associated with $\Omega_j$, so that
\begin{equation}
\Omega_j=\{h_j>0\},
\qquad
\partial\Omega_j=\{h_j=0\}.
\label{eq:compactness-defining-functions}
\end{equation}
By the uniform RC--GNP assumption, there exists a constant
$A>0$, independent of $j$, such that
\begin{equation}
\|h_j\|_{C^{2,\alpha}(M)}\le A.
\label{eq:compactness-uniform-C2alpha}
\end{equation}

Since $M$ is compact, the embedding
\[
C^{2,\alpha}(M)\subset C^2(M)
\]
is compact. Hence, after extracting a subsequence, there exists
\[
h\in C^{2,\alpha}(M)
\]
such that
\begin{equation}
h_j\longrightarrow h
\qquad\text{strongly in }C^2(M).
\label{eq:compactness-C2-convergence}
\end{equation}
In particular,
\[
h_j\longrightarrow h
\quad\text{uniformly on }M,
\]
and
\[
\nabla_g h_j\longrightarrow\nabla_g h
\quad\text{uniformly on }M.
\]

Define
\begin{equation}
\Omega:=\{h>0\}.
\label{eq:compactness-limit-domain}
\end{equation}

We first prove that $0$ is a regular value of $h$. Let
$x\in M$ be such that
\[
h(x)=0.
\]
Since $h_j\to h$ uniformly, we have
\[
|h_j(x)|<\rho_0
\]
for all sufficiently large $j$, where $\rho_0>0$ is the constant
appearing in the uniform non-degeneracy condition of the RC--GNP
assumption. Therefore,
\[
|\nabla_g h_j(x)|_g\ge c_0
\]
for all sufficiently large $j$. Passing to the limit and using the
uniform convergence of $\nabla_g h_j$ gives
\[
|\nabla_g h(x)|_g\ge c_0.
\]
Consequently,
\begin{equation}
|\nabla_g h|_g\ge c_0
\qquad\text{on }\{h=0\}.
\label{eq:compactness-limit-nondegeneracy}
\end{equation}
Thus $0$ is a regular value of $h$, and therefore
\[
\partial\Omega=\{h=0\}
\]
is a $C^{2,\alpha}$ embedded hypersurface.

In fact, the same argument shows that
\begin{equation}
|\nabla_g h(x)|_g\ge c_0
\qquad
\text{whenever }|h(x)|<\rho_0.
\label{eq:compactness-limit-nondegeneracy-neighborhood}
\end{equation}

We next prove the convergence of the boundaries. We claim that
\begin{equation}
d_H(\partial\Omega_j,\partial\Omega)
\longrightarrow0.
\label{eq:compactness-boundary-Hausdorff}
\end{equation}

Suppose first, by contradiction, that
\[
\sup_{x\in\partial\Omega_j}
d_g(x,\partial\Omega)
\not\longrightarrow0.
\]
Then there exist $\varepsilon>0$, a subsequence, and points
$x_j\in\partial\Omega_j$ such that
\[
d_g(x_j,\partial\Omega)\ge\varepsilon.
\]
Since $M$ is compact, after passing to a further subsequence we may
assume that
\[
x_j\longrightarrow x\in M.
\]
Because $x_j\in\partial\Omega_j$, we have
\[
h_j(x_j)=0.
\]
Hence
\[
|h(x_j)|
=
|h(x_j)-h_j(x_j)|
\le
\|h-h_j\|_{L^\infty(M)}
\longrightarrow0.
\]
By continuity of $h$,
\[
h(x)=0.
\]
Thus
\[
x\in\partial\Omega.
\]
It follows that
\[
d_g(x_j,\partial\Omega)
\le d_g(x_j,x)
\longrightarrow0,
\]
which contradicts
\[
d_g(x_j,\partial\Omega)\ge\varepsilon.
\]
Therefore,
\begin{equation}
\sup_{x\in\partial\Omega_j}
d_g(x,\partial\Omega)
\longrightarrow0.
\label{eq:compactness-boundary-one-sided}
\end{equation}

Conversely, suppose that
\[
\sup_{x\in\partial\Omega}
d_g(x,\partial\Omega_j)
\not\longrightarrow0.
\]
Then there exist $\varepsilon>0$, a subsequence, and points
$x_j\in\partial\Omega$ such that
\[
d_g(x_j,\partial\Omega_j)\ge\varepsilon.
\]
After extraction, we may assume
\[
x_j\longrightarrow x\in\partial\Omega.
\]

Since $0$ is a regular value of $h$, there exists a neighborhood $U$
of $x$ in which $h$ takes both positive and negative values. Hence
there exist points
\[
y^+,y^-\in U
\]
such that
\[
h(y^+)>0,
\qquad
h(y^-)<0.
\]
By the uniform convergence $h_j\to h$, for all sufficiently large $j$,
\[
h_j(y^+)>0,
\qquad
h_j(y^-)<0.
\]
Let $\gamma:[0,1]\to U$ be a continuous curve joining $y^-$ to
$y^+$. Then
\[
h_j(\gamma(0))<0,
\qquad
h_j(\gamma(1))>0.
\]
By the intermediate value theorem, there exists
$t_j\in(0,1)$ such that
\[
h_j(\gamma(t_j))=0.
\]
Consequently,
\[
z_j:=\gamma(t_j)\in\partial\Omega_j.
\]
Since $U$ can be chosen arbitrarily small around $x$ and
$x_j\to x$, this contradicts
\[
d_g(x_j,\partial\Omega_j)\ge\varepsilon.
\]
Therefore,
\begin{equation}
\sup_{x\in\partial\Omega}
d_g(x,\partial\Omega_j)
\longrightarrow0.
\label{eq:compactness-boundary-other-sided}
\end{equation}
Combining \eqref{eq:compactness-boundary-one-sided} and
\eqref{eq:compactness-boundary-other-sided}, we obtain
\[
d_H(\partial\Omega_j,\partial\Omega)\longrightarrow0.
\]

We now prove the compact convergence of the domains. Let
\[
K_1\subset\Omega.
\]
Since $h>0$ on the compact set $K_1$, there exists
\[
m_1:=\min_{K_1}h>0.
\]
For sufficiently large $j$,
\[
\|h_j-h\|_{L^\infty(M)}<\frac{m_1}{2}.
\]
Consequently,
\[
h_j(x)
\ge h(x)-\frac{m_1}{2}
\ge\frac{m_1}{2}>0
\]
for every $x\in K_1$. Hence
\[
K_1\subset\Omega_j
\]
for all sufficiently large $j$.

Similarly, let
\[
K_2\subset M\setminus\overline{\Omega}.
\]
Since $h<0$ on $K_2$, we may set
\[
m_2:=-\max_{K_2}h>0.
\]
For sufficiently large $j$,
\[
\|h_j-h\|_{L^\infty(M)}<\frac{m_2}{2},
\]
and therefore
\[
h_j(x)
\le h(x)+\frac{m_2}{2}
\le-\frac{m_2}{2}<0
\]
for every $x\in K_2$. Hence
\[
K_2\subset M\setminus\Omega_j
\]
for all sufficiently large $j$.

Thus
\begin{equation}
\Omega_j\overset{K}{\longrightarrow}\Omega.
\label{eq:compactness-K-final}
\end{equation}

It remains to verify that $\Omega$ belongs to the admissible class.
The uniform $C^{2,\alpha}$ bound passes to the limit:
\[
\|h\|_{C^{2,\alpha}(M)}\le A.
\]
The non-degeneracy condition is given by
\eqref{eq:compactness-limit-nondegeneracy-neighborhood}.

Moreover, the uniform geometric separation assumptions in the
definition of $\mathcal A_C(M)$, together with
\[
d_H(\partial\Omega_j,\partial\Omega)\to0,
\]
imply
\[
C\subset\Omega.
\]
Likewise, if the admissible class is defined with a fixed compact set
$K_{\rm ext}\subset M\setminus C$ satisfying
\[
K_{\rm ext}\subset M\setminus\Omega_j
\qquad\text{for every }j,
\]
then the uniform convergence of the defining functions implies
\[
K_{\rm ext}\subset M\setminus\Omega.
\]
In particular, $\Omega\neq M$.

Finally, since
\[
\nu_{\Omega_j}
=
\frac{\nabla_g h_j}
{|\nabla_g h_j|_g}
\]
on $\partial\Omega_j$, the $C^2$ convergence of the defining
functions and the uniform non-degeneracy condition imply the uniform
convergence of the corresponding normal fields. The uniform tubular
neighborhood condition is therefore stable under this convergence,
possibly after decreasing the common tubular radius. Hence $\Omega$
satisfies the same RC--GNP geometric conditions and
\[
\Omega\in\mathcal A_C(M).
\]

We have thus proved the Hausdorff convergence of the closures:
\[
d_H(\overline{\Omega_j},\overline{\Omega})
\longrightarrow0.
\]

Finally, we prove the strong $L^1$ convergence of the characteristic
functions. Let
\[
x\in M\setminus\partial\Omega.
\]
If $x\in\Omega$, then there exists a compact neighborhood
$K_x\subset\Omega$ containing $x$. By compact convergence,
$x\in\Omega_j$ for all sufficiently large $j$, and therefore
\[
\chi_{\Omega_j}(x)\longrightarrow1=\chi_\Omega(x).
\]
If
$x\in M\setminus\overline{\Omega}$, then there exists a compact
neighborhood
$K_x\subset M\setminus\overline{\Omega}$ containing $x$. Hence
$x\notin\Omega_j$ for all sufficiently large $j$, and
\[
\chi_{\Omega_j}(x)\longrightarrow0=\chi_\Omega(x).
\]
Thus
\[
\chi_{\Omega_j}(x)\longrightarrow\chi_\Omega(x)
\]
for every
$x\in M\setminus\partial\Omega$.

Since $\partial\Omega$ is a $C^{2,\alpha}$ hypersurface,
\[
dV_g(\partial\Omega)=0.
\]
Therefore
\[
\chi_{\Omega_j}\longrightarrow\chi_\Omega
\qquad\text{almost everywhere on }M.
\]
Moreover,
\[
|\chi_{\Omega_j}-\chi_\Omega|\le1,
\]
and
\[
\int_M1\,dV_g<\infty
\]
because $M$ is compact. The dominated convergence theorem gives
\[
\int_M
|\chi_{\Omega_j}-\chi_\Omega|\,dV_g
\longrightarrow0.
\]
Consequently,
\[
\chi_{\Omega_j}\longrightarrow\chi_\Omega
\qquad\text{strongly in }L^1(M,dV_g).
\]
This completes the proof.
\end{proof}
\subsection{Stability of the Dirichlet problem}
\label{subsec:stability-dirichlet}

The following result is the Riemannian stability statement that will be
used in the existence proof.
\begin{theorem}[Stability of the Dirichlet problem]
\label{thm:stability-dirichlet}
Let
\[
\Omega_j,\Omega\in\mathcal A_C(M),
\]
and assume that
\[
\Omega_j\longrightarrow\Omega
\]
in the topology induced by the uniformly controlled defining
functions of Definition~\ref{def:RC-GNP}. More precisely, if
$h_j$ and $h$ are defining functions associated with $\Omega_j$ and
$\Omega$, respectively, then
\[
h_j\longrightarrow h
\qquad\text{strongly in }C^{2,\alpha}(M).
\]
Let $f\in L^2(M)$ be fixed and assume that
\[
\operatorname{supp}f\subset K\subset C
\subset\Omega_j\cap\Omega
\]
for all $j$. Let $u_{\Omega_j}$ and $u_\Omega$ denote the unique weak
solutions of
\eqref{eq:dirichlet-problem-section2}, extended by zero outside their
respective domains.

Then
\begin{equation}
u_{\Omega_j}\longrightarrow u_\Omega
\qquad\text{strongly in }H^1(M).
\label{eq:strong-H1-stability}
\end{equation}
In particular,
\begin{equation}
\int_M|\nabla_g u_{\Omega_j}|_g^2\,dV_g
\longrightarrow
\int_M|\nabla_g u_\Omega|_g^2\,dV_g.
\label{eq:energy-stability}
\end{equation}
\end{theorem}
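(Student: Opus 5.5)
I would prove this by the standard Mosco-type argument: uniform bound, weak compactness, identification of the limit, then strong convergence via convergence of energies. The uniform control of the defining functions supplies the two set-theoretic facts that make the identification work.

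\emph{Uniform bound.} Testing \eqref{eq:weak-dirichlet} on $\Omega_j$ with $\varphi=u_{\Omega_j}$ gives
\[
\|\nabla_g u_{\Omega_j}\|_{L^2(M)}^2=\int_M f u_{\Omega_j}\,dV_g\le \|f\|_{L^2}\|u_{\Omega_j}\|_{L^2}.
\]
All $\Omega_j$ avoid a fixed nonempty open set: by the convergence $h_j\to h$ and $\Omega\neq M$, there is a small geodesic ball $B$ with $h\le -\delta<0$ on $B$, hence $h_j<0$ on $B$ for large $j$. A Poincaré inequality on $M$ for $H^1$ functions vanishing on $B$ then gives $\sup_j\|u_{\Omega_j}\|_{H^1(M)}<\infty$. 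Extracting a subsequence, $u_{\Omega_j}\rightharpoonup u^*$ weakly in $H^1(M)$ and strongly in $L^2(M)$ by Rellich on the compact manifold.

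\emph{Identification $u^*=u_\Omega$.} I would check two properties.
\begin{enumerate}
\item[(a)] $u^*\in H_0^1(\Omega)$. Since $u_{\Omega_j}=0$ a.e. outside $\Omega_j$, and $\chi_{\Omega_j}\to\chi_\Omega$ a.e. by the argument of Theorem~\ref{thm:compactness-admissible}, we get $u^*=0$ a.e. on $M\setminus\overline\Omega$. Because $\partial\Omega$ is $C^{2,\alpha}$ (in particular Lipschitz), an $H^1(M)$ function vanishing a.e. outside $\Omega$ lies in $H^1_0(\Omega)$. This is the step where boundary regularity of the limit is genuinely used.
\item[(b)] The weak equation holds. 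For $\varphi\in C_c^\infty(\Omega)$, the support of $\varphi$ is a compact $K_1\subset\Omega$, so by \eqref{eq:compact-conv-inside} (proved exactly as in Theorem~\ref{thm:compactness-admissible}) we have $K_1\subset\Omega_j$ for large $j$. Then $\varphi$ is admissible in the equation for $u_{\Omega_j}$, and passing to the weak limit gives $\int_\Omega g(\nabla_g u^*,\nabla_g\varphi)\,dV_g=\int f\varphi\,dV_g$. Density extends this to all of $H_0^1(\Omega)$.
\end{enumerate}
Uniqueness from Lax--Milgram yields $u^*=u_\Omega$, and since the limit is independent of the subsequence, the whole sequence converges weakly.

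\emph{Strong convergence.} The energy identity gives
\[
\|\nabla_g u_{\Omega_j}\|_{L^2}^2=\int f u_{\Omega_j}\,dV_g\to\int f u_\Omega\,dV_g=\|\nabla_g u_\Omega\|_{L^2}^2,
\]
using strong $L^2$ convergence. Together with weak convergence in the Hilbert space $H^1(M)$ and $L^2$ convergence of the functions, convergence of norms yields strong convergence \eqref{eq:strong-H1-stability}, and \eqref{eq:energy-stability} is then immediate.

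\emph{Main obstacle.} I expect step (a) to be the delicate point. One must make sure that no limit mass concentrates on $\partial\Omega$ and that vanishing outside $\overline\Omega$ really implies membership in $H_0^1(\Omega)$. If the Lipschitz characterization feels too quick on a manifold, I would first try an alternative using the tubular map $\mathcal E_\Omega$. Writing the $C^{2,\alpha}$-close boundaries $\partial\Omega_j$ as normal graphs $t=\psi_j(x)$ over $\partial\Omega$ with $\psi_j\to0$, I would build diffeomorphisms $T_j\to\mathrm{id}$ in $C^1$ with $T_j(\Omega)=\Omega_j$. Then $u_{\Omega_j}\circ T_j\in H_0^1(\Omega)$ converges weakly to $u^*$, and $H_0^1(\Omega)$ is weakly closed.
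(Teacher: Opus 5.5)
Your proof is correct. Its skeleton is the same as the paper's: uniform $H^1$ bound, weak compactness with Rellich, identification of the limit, energy identity, then weak convergence plus convergence of norms. The genuine differences are in the identification step and in how you justify the uniform bound.

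\textbf{Identification of the limit.} The paper constructs, for large $j$, diffeomorphisms $\Phi_j$ with $\Phi_j(\Omega)=\Omega_j$ and $\|\Phi_j-\operatorname{Id}\|_{C^1(M)}\to0$. It does this by writing $\partial\Omega_j$ as a normal graph over $\partial\Omega$ in the tubular neighbourhood and taking a time-one flow. It then tests the equation on $\Omega_j$ with $\varphi\circ\Phi_j^{-1}$ for arbitrary $\varphi\in H_0^1(\Omega)$.

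You need no diffeomorphisms. Uniform convergence $h_j\to h$ gives compact convergence, so you test directly with $\varphi\in C_c^\infty(\Omega)$ and conclude by density. The same convergence gives $u^*=0$ a.e.\ outside $\overline\Omega$, hence $u^*\in H_0^1(\Omega)$ by the characterization of $H_0^1$ on Lipschitz domains. Your route therefore uses only $C^0$ convergence of the defining functions plus Lipschitz regularity of the limit.

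It is also more complete than the paper's. The paper passes from the limiting weak identity straight to ``$v\in H_0^1(\Omega)$'', which that identity does not imply. Your step (a) is exactly the missing argument. Your fallback, pulling $u_{\Omega_j}$ back by $T_j$ and using weak closedness of $H_0^1(\Omega)$, is the natural way to complete the paper's transport version. Once completed that way, the transport approach does have an advantage: it gives recovery sequences for all of $H_0^1(\Omega)$ at once and avoids the Lipschitz characterization of $H_0^1$. The price is the tubular and implicit-function construction.

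\textbf{Uniform bound.} The paper merely asserts a $j$-independent Poincar\'e constant. Your fixed exterior ball is a concrete reason, but you should say explicitly why $\{h<0\}\neq\emptyset$. First, $\Omega\neq M$ is forced by the uniqueness of $u_\Omega$, since on the closed manifold the constants belong to $H_0^1(M)$. Then $h\ge0$ on $M$ is impossible: $h$ would attain its minimum on the nonempty set $\{h=0\}$, where $|\nabla_g h|_g\ge c_0$.
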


\begin{proof}
For brevity, set
\[
u_j:=u_{\Omega_j},
\qquad
u:=u_\Omega.
\]
The weak formulations are
\begin{equation}
\int_{\Omega_j}
\langle\nabla_g u_j,\nabla_g\varphi\rangle_g\,dV_g
=
\int_{\Omega_j}f\varphi\,dV_g,
\qquad
\forall\varphi\in H_0^1(\Omega_j),
\label{eq:weak-j}
\end{equation}
and
\begin{equation}
\int_{\Omega}
\langle\nabla_g u,\nabla_g\varphi\rangle_g\,dV_g
=
\int_{\Omega}f\varphi\,dV_g,
\qquad
\forall\varphi\in H_0^1(\Omega).
\label{eq:weak-limit}
\end{equation}

Since $u_j$ and $u$ are extended by zero outside their domains, we
regard them as elements of $H^1(M)$.

We divide the proof into several steps.

\medskip
\noindent
\textbf{Step 1: Uniform energy estimate.}

Taking $\varphi=u_j$ in \eqref{eq:weak-j}, we obtain
\begin{equation}
\int_{\Omega_j}|\nabla_g u_j|_g^2\,dV_g
=
\int_{\Omega_j}fu_j\,dV_g.
\label{eq:energy-j}
\end{equation}
By the Cauchy--Schwarz inequality,
\[
\int_{\Omega_j}fu_j\,dV_g
\le
\|f\|_{L^2(M)}
\|u_j\|_{L^2(M)}.
\]
Since every $\Omega_j$ belongs to the uniformly controlled admissible
class, the Poincar\'e inequality holds with a constant independent of
$j$:
\begin{equation}
\|u_j\|_{L^2(M)}
\le
C_P\|\nabla_g u_j\|_{L^2(M)}.
\label{eq:uniform-poincare}
\end{equation}
Consequently,
\[
\|\nabla_g u_j\|_{L^2(M)}^2
\le
C_P\|f\|_{L^2(M)}
\|\nabla_g u_j\|_{L^2(M)}.
\]
Hence
\[
\|\nabla_g u_j\|_{L^2(M)}
\le
C_P\|f\|_{L^2(M)},
\]
and therefore
\begin{equation}
\|u_j\|_{H^1(M)}
\le C
\label{eq:uniform-H1}
\end{equation}
with a constant $C$ independent of $j$.

Thus $(u_j)$ is bounded in $H^1(M)$.

\medskip
\noindent
\textbf{Step 2: Weak compactness.}

Since $H^1(M)$ is reflexive, there exist a subsequence, still denoted
by $(u_j)$, and a function
\[
v\in H^1(M)
\]
such that
\begin{equation}
u_j\rightharpoonup v
\qquad\text{weakly in }H^1(M).
\label{eq:weak-convergence}
\end{equation}
Since $M$ is compact, the Rellich compactness theorem gives
\[
H^1(M)\subset L^2(M).
\]
Therefore,
\begin{equation}
u_j\longrightarrow v
\qquad\text{strongly in }L^2(M).
\label{eq:strong-L2}
\end{equation}

We now prove that
\[
v=u.
\]

\medskip
\noindent
\textbf{Step 3: Construction of transport diffeomorphisms.}

The uniform $C^{2,\alpha}$ convergence of the defining functions,
together with the uniform non-degeneracy and tubular-neighborhood
conditions in Definition~\ref{def:RC-GNP}, implies that, for all
sufficiently large $j$, there exists a $C^2$ diffeomorphism
\[
\Phi_j:M\longrightarrow M
\]
such that
\begin{equation}
\Phi_j(\Omega)=\Omega_j,
\qquad
\Phi_j(\partial\Omega)=\partial\Omega_j,
\label{eq:Phi-domain}
\end{equation}
and
\begin{equation}
\|\Phi_j-\operatorname{Id}\|_{C^1(M)}
\longrightarrow0.
\label{eq:Phi-convergence}
\end{equation}

For completeness, we briefly justify this construction. The
$C^{2,\alpha}$ convergence of $h_j$ to $h$ and the uniform lower bound
\[
|\nabla_g h|_g\ge c_0
\qquad\text{near }\partial\Omega
\]
imply, by the implicit function theorem in tubular coordinates, that
$\partial\Omega_j$ can be written as a normal graph over
$\partial\Omega$:
\[
\partial\Omega_j
=
\left\{
\exp_x(\rho_j(x)\nu_\Omega(x)):
x\in\partial\Omega
\right\},
\]
where
\[
\rho_j\longrightarrow0
\qquad\text{in }C^2(\partial\Omega).
\]
Choose a smooth cut-off function $\eta$ supported in the fixed tubular
neighborhood of $\partial\Omega$ and equal to $1$ near
$\partial\Omega$. Define a vector field
\[
X_j
=
\eta\,\rho_j\,\nu_\Omega
\]
in the tubular neighborhood and extend it by zero outside that
neighborhood. Then
\[
\|X_j\|_{C^1(M)}\longrightarrow0.
\]
For $j$ sufficiently large, the time-one flow $\Phi_j$ of $X_j$ is a
global $C^2$ diffeomorphism of $M$ satisfying
\eqref{eq:Phi-domain} and \eqref{eq:Phi-convergence}.

\medskip
\noindent
\textbf{Step 4: Recovery of test functions.}

Let
\[
\varphi\in H_0^1(\Omega).
\]
Define
\[
\varphi_j
:=
\varphi\circ\Phi_j^{-1}
\qquad\text{in }\Omega_j.
\]
Then
\[
\varphi_j\in H_0^1(\Omega_j).
\]
Moreover, since
\[
\Phi_j\to\operatorname{Id}
\qquad\text{in }C^1(M),
\]
the change-of-variables formula gives
\begin{equation}
\varphi_j\longrightarrow\varphi
\qquad\text{strongly in }H^1(M),
\label{eq:test-function-convergence}
\end{equation}
where both functions are extended by zero outside their domains.

We use $\varphi_j$ as a test function in \eqref{eq:weak-j}. We obtain
\begin{equation}
\int_{\Omega_j}
\langle\nabla_g u_j,\nabla_g\varphi_j\rangle_g\,dV_g
=
\int_{\Omega_j}f\varphi_j\,dV_g.
\label{eq:weak-test-transported}
\end{equation}

Because $\varphi_j\to\varphi$ strongly in $H^1(M)$ and
$u_j\rightharpoonup v$ weakly in $H^1(M)$,
\[
\int_{\Omega_j}
\langle\nabla_g u_j,\nabla_g\varphi_j\rangle_g\,dV_g
\longrightarrow
\int_M
\langle\nabla_g v,\nabla_g\varphi\rangle_g\,dV_g.
\]
On the right-hand side, since $f\in L^2(M)$ and
$\varphi_j\to\varphi$ strongly in $L^2(M)$,
\[
\int_{\Omega_j}f\varphi_j\,dV_g
\longrightarrow
\int_Mf\varphi\,dV_g.
\]
Since $\varphi$ vanishes outside $\Omega$, we obtain
\[
\int_M
\langle\nabla_g v,\nabla_g\varphi\rangle_g\,dV_g
=
\int_Mf\varphi\,dV_g
\]
for every $\varphi\in H_0^1(\Omega)$.

Thus
\[
v\in H_0^1(\Omega)
\]
and $v$ satisfies the weak formulation of the Dirichlet problem on
$\Omega$. By uniqueness of the weak solution,
\[
v=u_\Omega=u.
\]
Consequently, every weakly convergent subsequence has the same limit
$u$. Hence
\begin{equation}
u_j\rightharpoonup u
\qquad\text{weakly in }H^1(M).
\label{eq:weak-convergence-final}
\end{equation}

Moreover, by the compact embedding,
\begin{equation}
u_j\longrightarrow u
\qquad\text{strongly in }L^2(M).
\label{eq:L2-final}
\end{equation}

\medskip
\noindent
\textbf{Step 5: Convergence of the energies.}

Testing \eqref{eq:weak-j} with $u_j$ gives
\[
\int_M|\nabla_g u_j|_g^2\,dV_g
=
\int_Mfu_j\,dV_g.
\]
Here we have used the zero extensions of $u_j$ and the fact that
$f$ is defined on $M$.

Similarly, testing \eqref{eq:weak-limit} with $u$ gives
\[
\int_M|\nabla_g u|_g^2\,dV_g
=
\int_Mfu\,dV_g.
\]

Since
\[
u_j\longrightarrow u
\qquad\text{strongly in }L^2(M),
\]
we have
\[
\int_Mfu_j\,dV_g
\longrightarrow
\int_Mfu\,dV_g.
\]
Therefore,
\begin{equation}
\int_M|\nabla_g u_j|_g^2\,dV_g
\longrightarrow
\int_M|\nabla_g u|_g^2\,dV_g.
\label{eq:energy-convergence-final}
\end{equation}

\medskip
\noindent
\textbf{Step 6: Strong convergence in $H^1(M)$.}

We already know that
\[
u_j\rightharpoonup u
\qquad\text{weakly in }H^1(M).
\]
In addition, \eqref{eq:energy-convergence-final} gives
\[
\|\nabla_g u_j\|_{L^2(M)}
\longrightarrow
\|\nabla_g u\|_{L^2(M)}.
\]
Since
\[
\nabla_g u_j\rightharpoonup\nabla_g u
\qquad\text{weakly in }L^2(T^*M),
\]
the weak convergence together with convergence of the norms implies
\[
\nabla_g u_j\longrightarrow\nabla_g u
\qquad\text{strongly in }L^2(T^*M).
\]
Together with the strong $L^2(M)$ convergence
\[
u_j\longrightarrow u,
\]
we conclude that
\[
\boxed{
u_j\longrightarrow u
\qquad\text{strongly in }H^1(M).
}
\]
This proves \eqref{eq:strong-H1-stability}, and
\eqref{eq:energy-stability} follows from
\eqref{eq:energy-convergence-final}.
\end{proof}
\begin{remark}
The strong convergence
\eqref{eq:strong-H1-stability} is not a consequence of the compactness
of $M$ alone. It follows from the uniform geometric control imposed on
the admissible domains. This is precisely the role played by the
$RC$-GNP condition.
\end{remark}

\subsection{Riemannian perimeter and volume}
\label{subsec:perimeter-volume}

For a sufficiently regular domain $\Omega\subset M$, its Riemannian
perimeter is defined by
\begin{equation}
P_g(\Omega)
=
\mathcal H_g^{n-1}(\partial\Omega)
=
\int_{\partial\Omega}dS_g,
\label{eq:Riemannian-perimeter}
\end{equation}
and its volume by
\begin{equation}
V_g(\Omega)
=
\operatorname{Vol}_g(\Omega)
=
\int_\Omega dV_g.
\label{eq:Riemannian-volume}
\end{equation}

If
\[
\chi_{\Omega_j}\to\chi_\Omega
\qquad\text{in }L^1(M,dV_g),
\]
then
\begin{equation}
V_g(\Omega_j)\longrightarrow V_g(\Omega).
\label{eq:volume-continuity}
\end{equation}

For the perimeter, we shall use the lower semicontinuity property
\begin{equation}
\boxed{
P_g(\Omega)
\leq
\liminf_{j\to\infty}P_g(\Omega_j).
}
\label{eq:perimeter-lsc}
\end{equation}

In the present uniformly regular class, this follows from the
standard lower semicontinuity of perimeter under $L^1$ convergence of
sets of finite perimeter.

\subsection{The Riemannian mean curvature convention}
\label{subsec:mean-curvature}

Let $\Sigma=\partial\Omega$ be a $C^2$ hypersurface. Throughout the
paper, $\nu$ denotes the \emph{outward} unit normal to $\partial\Omega$.

The second fundamental form is defined by
\begin{equation}
\mathrm{II}_\Sigma(X,Y)
=
g(\nabla_X\nu,Y),
\qquad
X,Y\in T\Sigma.
\label{eq:second-fundamental-form}
\end{equation}

The scalar mean curvature is defined as the trace of the second
fundamental form with respect to the induced metric:
\begin{equation}
\boxed{
H_{\partial\Omega}
=
\operatorname{tr}_{g_{\partial\Omega}}
\mathrm{II}_{\partial\Omega}.
}
\label{eq:mean-curvature-definition}
\end{equation}

Equivalently,
\begin{equation}
\boxed{
H_{\partial\Omega}
=
\operatorname{div}_{\partial\Omega}\nu.
}
\label{eq:mean-curvature-div}
\end{equation}

With this convention, the first variation of the Riemannian
perimeter under a deformation field $X$ is
\begin{equation}
\left.
\frac{d}{dt}
P_g(\Omega_t)
\right|_{t=0}
=
\int_{\partial\Omega}
H_{\partial\Omega}
\,g(X,\nu)\,dS_g.
\label{eq:first-variation-perimeter}
\end{equation}

This convention will be used consistently in all the subsequent
shape derivative and free boundary formulas.

\subsection{The comparison principle}
\label{subsec:comparison-principle}

We shall repeatedly use the following standard comparison principle
for the Laplace--Beltrami operator.

\begin{lemma}[Comparison principle]
\label{lem:comparison}
Let $\Omega\subset M$ be connected and let
$v_1,v_2\in C^2(\Omega)\cap C^0(\overline{\Omega})$ satisfy
\[
\Delta_g v_1\leq\Delta_g v_2
\qquad\text{in }\Omega
\]
and
\[
v_1\geq v_2
\qquad\text{on }\partial\Omega.
\]
Then
\[
v_1\geq v_2
\qquad\text{in }\Omega.
\]
\end{lemma}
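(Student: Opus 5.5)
The plan is to reduce to the weak maximum principle for $\Delta_g$ applied to the difference $w:=v_1-v_2$.

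\textbf{Setting up $w$.} By linearity of $\Delta_g$,
\[
\Delta_g w\le 0 \quad\text{in }\Omega,\qquad w\ge 0\quad\text{on }\partial\Omega ,
\]
and $w\in C^2(\Omega)\cap C^0(\overline\Omega)$. The claim is then equivalent to $w\ge0$ in $\Omega$. This is the minimum principle for superharmonic functions of the Laplace--Beltrami operator, whose local form is \eqref{eq:laplace-beltrami-local}. That operator is uniformly elliptic with smooth coefficients and has no zeroth-order term.

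\textbf{Argument when $\overline\Omega$ is compact.} Since $M$ is compact, $\overline\Omega$ is automatically compact, so I first assume $\partial\Omega\neq\emptyset$, i.e.\ $\Omega\neq M$.

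\begin{enumerate}
\item Suppose $m:=\min_{\overline\Omega}w<0$. The minimum is attained because $w$ is continuous on the compact set $\overline\Omega$.
\item Since $w\ge0$ on $\partial\Omega$, the minimum is attained at some interior point $x_0\in\Omega$.
\item A second-derivative test alone fails because the inequality $\Delta_g w\le0$ is not strict. I would therefore perturb. Work in a coordinate chart $U$ around $x_0$ and set $w_\varepsilon=w-\varepsilon e^{\lambda x^1}$.
\item For $\lambda$ large, the local formula gives
\[
\Delta_g e^{\lambda x^1}=e^{\lambda x^1}\bigl(\lambda^2 g^{11}+\lambda\,b^1\bigr)>0
\]
on a small ball, since $g^{11}\ge c>0$ and $b^1$ is bounded. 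Hence $\Delta_g w_\varepsilon<0$ there.
\item Rather than perturbing only locally, it is cleaner to use the strong minimum principle directly. Hopf's strong maximum principle, which is local, applies in each chart to the operator $L=\Delta_g$.
\item It shows that the set $\{x\in\Omega: w(x)=m\}$ is open. It is also closed in $\Omega$ by continuity, and it is nonempty since it contains $x_0$.
\item By connectedness of $\Omega$, $w\equiv m$ on $\Omega$. By continuity $w=m<0$ on $\partial\Omega$, contradicting $w\ge0$ there.
\end{enumerate}

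\textbf{The case $\Omega=M$.} Here $\partial\Omega=\emptyset$ and the hypothesis on the boundary is vacuous. The statement then requires care, and I will note that it is only meant for $\partial\Omega\neq\emptyset$, which holds for all admissible domains. Indeed, every admissible domain excludes $K_{\rm ext}$, and so $\Omega\neq M$.

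\textbf{Main obstacle.} The main obstacle is the openness step, i.e.\ justifying the strong maximum principle on a manifold. I would handle it via E.~Hopf's lemma in a geodesic ball inside a chart. The ball is chosen small enough that the interior-sphere construction with barrier $e^{-\lambda r^2}-e^{-\lambda R^2}$ works for the uniformly elliptic operator in coordinates. That barrier yields that a nonconstant superharmonic function cannot attain an interior minimum in a ball.
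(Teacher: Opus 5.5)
Your proof is correct. The paper gives no proof of this lemma; it only invokes it as the ``standard'' comparison principle. What you supply is the classical argument behind it. You pass to $w=v_1-v_2$ and suppose an interior minimum $m<0$. Hopf's strong maximum principle, applied in charts where $\Delta_g=g^{ij}\partial_{ij}+b^i\partial_i$ is uniformly elliptic with bounded coefficients and no zeroth-order term, makes $\{w=m\}$ open in $\Omega$. Connectedness and continuity up to $\partial\Omega$ then give the contradiction.

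You are right that the statement as printed is false for $\Omega=M$: take $v_1\equiv 0$ and $v_2\equiv 1$ on the closed manifold. So $\partial\Omega\neq\emptyset$ must be added as a hypothesis. Do not derive it from $K_{\rm ext}$, though. That set appears only conditionally inside the proof of Theorem~\ref{thm:compactness-admissible} and is not part of Definition~\ref{def:admissible-class}, so impose $\partial\Omega\neq\emptyset$ explicitly.

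Your steps 3--4 can be deleted. A purely local perturbation $w-\varepsilon e^{\lambda x^1}$ on a small ball gives no contradiction by itself, because the perturbed minimum may move to the boundary of that ball. This is exactly why the Hopf barrier argument you then invoke is needed.

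Finally, your classical route needs $w\in C^2(\Omega)$, whereas the paper immediately applies the lemma to $u_\Omega$ with $f\in L^p$ only, where $u_\Omega\in W^{2,p}$. A Stampacchia-type argument covers that weak setting: test the inequality against $(w+\delta)^-$, which is compactly supported in $\Omega$ because $w\ge 0$ on $\partial\Omega$.
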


In particular, if
\[
-\Delta_g u=f\geq0
\quad\text{in }\Omega,
\qquad
u=0
\quad\text{on }\partial\Omega,
\]
then
\[
u\geq0
\qquad\text{in }\Omega.
\]

If, moreover,
\[
f\geq0,
\qquad
f\not\equiv0,
\]
and $\Omega$ is connected, the strong maximum principle gives
\begin{equation}
u>0
\qquad\text{in }\Omega.
\label{eq:positive-state}
\end{equation}

\subsection{Hopf boundary point lemma}
\label{subsec:hopf}

We shall also use the Riemannian version of the Hopf boundary point
lemma.

\begin{lemma}[Hopf boundary point lemma]
\label{lem:hopf}
Let $\Omega\subset M$ be a connected domain and let
$x_0\in\partial\Omega$. Assume that $\partial\Omega$ satisfies an
interior geodesic ball condition at $x_0$.

Let
\[
v_1,v_2\in C^2(\Omega)\cap C^1(\overline{\Omega})
\]
satisfy
\[
\Delta_g v_1\leq\Delta_g v_2
\qquad\text{in }\Omega,
\]
and
\[
v_1\geq v_2
\qquad\text{in }\Omega.
\]

Suppose that
\[
v_1(x_0)=v_2(x_0)
\]
and
\[
v_1\not\equiv v_2.
\]

Then
\begin{equation}
\boxed{
\partial_\nu v_1(x_0)
<
\partial_\nu v_2(x_0),
}
\label{eq:hopf-normal}
\end{equation}
where $\nu$ denotes the outward unit normal to $\partial\Omega$ at
$x_0$.
\end{lemma}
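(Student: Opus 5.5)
The plan is to prove the Riemannian Hopf lemma by reducing it to the scalar case. Set
\[
w:=v_1-v_2 .
\]
Then $w\in C^2(\Omega)\cap C^1(\overline{\Omega})$, $\Delta_g w\le 0$ in $\Omega$ (so $w$ is superharmonic), $w\ge 0$ in $\Omega$, $w(x_0)=0$ and $w\not\equiv 0$. It suffices to show $\partial_\nu w(x_0)<0$.

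\textbf{Step 1: strict positivity.} First, $w>0$ in $\Omega$. Indeed, if $w$ vanished at an interior point, the strong minimum principle for the operator $\Delta_g$ would apply. This operator is uniformly elliptic with smooth coefficients in local coordinates, by \eqref{eq:laplace-beltrami-local}. Together with the connectedness of $\Omega$, it would force $w\equiv 0$, contradicting the hypothesis.

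\textbf{Step 2: barrier on an annulus.} By the interior geodesic ball condition, there is a geodesic ball $B=B_R(y)\subset\Omega$ with $x_0\in\partial B$. We shrink $R$ below the injectivity radius and within a normal coordinate chart. Then $r(x)=d_g(x,y)$ is smooth on $B\setminus\{y\}$ and $x_0$ is the only point of $\partial B$ touching $\partial\Omega$. The outward normal $\nu$ of $\partial\Omega$ at $x_0$ coincides with $\nabla_g r(x_0)$. On the annulus $A=\{R/2<r<R\}$ we use the barrier
\[
z=e^{-\lambda r^2}-e^{-\lambda R^2}.
\]
A direct computation gives
\[
\Delta_g z=e^{-\lambda r^2}\bigl(4\lambda^2r^2-2\lambda(1+r\Delta_g r)\bigr),
\]
using $|\nabla_g r|_g=1$. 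Since $\Delta_g r$ is bounded on $\overline A$ (it behaves like $(n-1)/r$ plus a bounded correction), choosing $\lambda$ large makes $\Delta_g z>0$ in $A$.

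\textbf{Step 3: comparison on the annulus.} On the inner sphere $\{r=R/2\}$, a compact subset of $\Omega$, Step 1 gives $w\ge m>0$. Choose $\varepsilon>0$ with $\varepsilon z\le m$ there. On $\{r=R\}$ we have $z=0\le w$. The function $w-\varepsilon z$ satisfies $\Delta_g(w-\varepsilon z)<0$ in $A$ and is $\ge 0$ on $\partial A$. By Lemma~\ref{lem:comparison}, applied on the connected annulus, $w\ge\varepsilon z$ in $A$.

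\textbf{Step 4: conclusion.} Since $w(x_0)=\varepsilon z(x_0)=0$ and $w\in C^1(\overline\Omega)$, differentiating along the inward geodesic from $x_0$ gives
\[
\partial_\nu w(x_0)\le\varepsilon\,\partial_\nu z(x_0)=-2\varepsilon\lambda R e^{-\lambda R^2}<0 .
\]
This is \eqref{eq:hopf-normal}.

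The main obstacle is Step 2: controlling $\Delta_g r$ uniformly on the annulus, which requires staying inside the injectivity radius, and identifying $\nu$ with $\nabla_g r$ at $x_0$. I would handle this by taking $R$ small enough that Hessian comparison (or a normal-coordinate expansion) gives a bounded $\Delta_g r$ on $\overline A$. The rest reduces to the Euclidean argument in local coordinates.
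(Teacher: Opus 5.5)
The paper gives no proof of this lemma. It quotes it as the standard Riemannian Hopf lemma and only remarks that the uniform tubular-neighbourhood condition supplies the interior geodesic ball. Your argument is the classical E.~Hopf barrier proof, carried out intrinsically with $r=d_g(\cdot,y)$, and it is correct:
\begin{itemize}
\item your formula for $\Delta_g z$ is right;
\item the comparison on the annulus is a legitimate use of Lemma~\ref{lem:comparison};
\item Step~4 correctly uses $w\in C^1(\overline\Omega)$ and $\partial_\nu z(x_0)=-2\lambda R e^{-\lambda R^2}$.
\end{itemize}

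Three points deserve more care.
\begin{itemize}
\item Shrinking $R$ must be done by re-centring, not by keeping $y$ fixed. Take $y'$ on a minimizing geodesic from $y$ to $x_0$ with $d_g(y',x_0)=R'$ and $R'<\operatorname{inj}(M)$. The triangle inequality then gives $B_{R'}(y')\subset B_R(y)\subset\Omega$ with $x_0\in\partial B_{R'}(y')$, and $r=d_g(\cdot,y')$ is smooth on $\overline{B_{R'}(y')}\setminus\{y'\}$.
\item The identification $\nu(x_0)=\nabla_g r(x_0)$ uses that $\partial\Omega$ is differentiable at $x_0$, which the statement assumes implicitly by referring to $\nu$. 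Together with $B\subset\Omega$, this forces tangency of $\partial B$ and $\partial\Omega$ at $x_0$ with matching outward normals.
\item What you call the main obstacle is not one. The closed annulus $\overline A=\{R'/2\le r\le R'\}$ is compact, avoids $y'$, and lies inside the injectivity radius. So $\Delta_g r$ is continuous and therefore bounded there, and no Hessian comparison is needed.
\end{itemize}

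Compared with the paper's bare citation, your route is self-contained. It makes explicit where the interior ball, the injectivity radius and the $C^1(\overline\Omega)$ regularity enter. An equally short alternative is to work in normal coordinates centred at $y'$. There the geodesic ball is a Euclidean ball, and the Gauss lemma makes the radial direction $g$-normal to it. The Euclidean Hopf lemma for the operator $g^{ij}\partial_{ij}+b^i\partial_i$ then applies directly, giving the same conclusion.
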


The uniform tubular-neighborhood assumption in the
$RC$-GNP condition guarantees the required interior geodesic ball
condition at boundary points of the admissible domains.

\subsection{Contact geometry}
\label{subsec:contact-geometry}

We finally record the geometric fact that will be used later when a
minimizing domain touches the reference domain $C$.
\begin{lemma}[Comparison of second fundamental forms at a contact point]
\label{lem:contact-curvature}
Let $C,\Omega\subset M$ be $C^2$ domains satisfying
\[
C\subset\Omega.
\]
Suppose that
\[
x_0\in\partial C\cap\partial\Omega
\]
and that $\partial C$ and $\partial\Omega$ are tangent at $x_0$.
Let $\nu_C$ and $\nu_\Omega$ denote the outward unit normals to
$\partial C$ and $\partial\Omega$, respectively.

Then
\begin{equation}
\nu_C(x_0)=\nu_\Omega(x_0).
\label{eq:normals-contact}
\end{equation}

Moreover, with the convention
\begin{equation}
\mathrm{II}_{\Sigma}(X,Y)
=
g(\nabla_X\nu_\Sigma,Y),
\label{eq:second-fundamental-form-contact}
\end{equation}
for a hypersurface $\Sigma$, one has
\begin{equation}
\mathrm{II}_{\Omega}(x_0)
\leq
\mathrm{II}_{C}(x_0)
\label{eq:II-contact}
\end{equation}
as quadratic forms on the common tangent space
\[
T_{x_0}\partial C=T_{x_0}\partial\Omega.
\]
Consequently, if the mean curvature is defined by
\begin{equation}
H_{\partial\Sigma}
=
\frac{1}{n-1}
\operatorname{tr}_{g_{\partial\Sigma}}
\mathrm{II}_{\Sigma},
\label{eq:mean-curvature-convention-contact}
\end{equation}
then
\begin{equation}
\boxed{
H_{\partial\Omega}(x_0)
\leq
H_{\partial C}(x_0).
}
\label{eq:mean-curvature-contact}
\end{equation}
\end{lemma}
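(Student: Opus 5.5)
The plan is to reduce the statement to a comparison of two graph functions over the common tangent plane, working in geodesic normal coordinates centered at $x_0$. The first-order Riemannian corrections vanish at $x_0$ in these coordinates, so the computation becomes essentially Euclidean at that single point.

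\emph{Step 1: the normals.} Tangency gives $T_{x_0}\partial C=T_{x_0}\partial\Omega$, hence $\nu_C(x_0)=\pm\nu_\Omega(x_0)$. Suppose $\nu_C(x_0)=-\nu_\Omega(x_0)$. The points $\exp_{x_0}(-t\nu_C(x_0))$ lie in $C$ for small $t>0$, since $\partial C$ is $C^2$ and $-\nu_C$ points inward. But they equal $\exp_{x_0}(t\nu_\Omega(x_0))$, which lie in $M\setminus\overline\Omega$ for small $t>0$ because $\partial\Omega$ is $C^2$ and $\nu_\Omega$ points outward. This contradicts $C\subset\Omega$, so \eqref{eq:normals-contact} holds. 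Write $\nu:=\nu_C(x_0)=\nu_\Omega(x_0)$ and $T:=T_{x_0}\partial C$.

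\emph{Step 2: graph representation.} Take normal coordinates $y=\exp_{x_0}(\xi+s\nu)$ with $\xi\in T$ and $s\in\mathbb R$. In these coordinates $g_{ij}(0)=\delta_{ij}$ and the Christoffel symbols vanish at $0$. By the implicit function theorem, near $x_0$ there are $C^2$ functions $\varphi_C,\varphi_\Omega$ on a neighborhood of $0$ in $T$ such that
\[
\partial C=\{s=\varphi_C(\xi)\},\quad C=\{s<\varphi_C(\xi)\},\qquad
\partial\Omega=\{s=\varphi_\Omega(\xi)\},\quad \Omega=\{s<\varphi_\Omega(\xi)\}.
\]
The orientations agree because both outward normals equal $\nu$ by Step 1. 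Moreover $\varphi_C(0)=\varphi_\Omega(0)=0$ and $D\varphi_C(0)=D\varphi_\Omega(0)=0$. The inclusion $C\subset\Omega$ forces $\varphi_C\le\varphi_\Omega$ near $0$, since any point with $\varphi_\Omega(\xi)\le s<\varphi_C(\xi)$ would lie in $C\setminus\Omega$. Hence $\varphi_\Omega-\varphi_C\ge0$ attains a minimum at $0$ with vanishing gradient, and so
\[
D^2\varphi_C(0)\le D^2\varphi_\Omega(0).
\]

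\emph{Step 3: relation between $\mathrm{II}$ and the Hessian; this is the main obstacle.} I need the identity
\[
\mathrm{II}_\Sigma(x_0)(\xi,\xi)=-D^2\varphi_\Sigma(0)(\xi,\xi),\qquad \xi\in T.
\]
To get it, I would write the outward normal field along the graph as $\nu_\Sigma=N/|N|_g$, where $N$ is the metric gradient of $s-\varphi_\Sigma(\xi)$. I would then differentiate $g(\nabla_\xi\nu_\Sigma,\xi)$ at $0$. Three things should make this collapse to the Euclidean formula at $x_0$: the vanishing Christoffel symbols at $x_0$, the fact that $g_{ij}(0)=\delta_{ij}$ with $\partial_kg_{ij}(0)=0$, and the vanishing of $D\varphi_\Sigma(0)$. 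As a sign check, the Euclidean sphere of radius $r$ locally has $\varphi=-|\xi|^2/(2r)+o(|\xi|^2)$ and, with the convention \eqref{eq:second-fundamental-form-contact}, $\mathrm{II}=g/r$. The care needed is in verifying that no first-derivative terms of $g$ survive, which is exactly where the choice of normal coordinates is used.

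\emph{Step 4: conclusion.} Combining Steps 2 and 3 gives $\mathrm{II}_\Omega(x_0)\le\mathrm{II}_C(x_0)$ as quadratic forms on $T$, which is \eqref{eq:II-contact}. Both forms are traced with respect to the same induced metric $g|_T$, and taking the trace is monotone on quadratic forms. Dividing by $n-1$ as in \eqref{eq:mean-curvature-convention-contact} then yields \eqref{eq:mean-curvature-contact}. The same inequality holds with the unnormalized convention \eqref{eq:mean-curvature-definition}.
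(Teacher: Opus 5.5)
Your proposal is correct and follows essentially the same route as the paper: normal coordinates at $x_0$, graph representations $\varphi_C,\varphi_\Omega$ with the common orientation, the Hessian inequality from the minimum of $\varphi_\Omega-\varphi_C$ at $0$, the identity $\mathrm{II}=-D^2\varphi(0)$ at the contact point, and monotonicity of the trace. Your geodesic-ray argument for $\nu_C(x_0)=\nu_\Omega(x_0)$ and your sketch of why only the Euclidean Hessian survives in Step~3 justify points that the paper only asserts.
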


\begin{proof}
Since $\partial C$ and $\partial\Omega$ are tangent at $x_0$, we have
\[
T_{x_0}\partial C
=
T_{x_0}\partial\Omega.
\]
Thus their unit normals at $x_0$ are either equal or opposite.

We claim that they must be equal. Indeed, since
\[
C\subset\Omega,
\]
the domain $C$ lies locally on the interior side of $\partial\Omega$.
Consequently, at the common contact point $x_0$, the outward direction
of $C$ must point in the same direction as the outward direction of
$\Omega$. Hence
\[
\nu_C(x_0)=\nu_\Omega(x_0).
\]
We denote this common unit normal by
\[
\nu_0.
\]

Choose geodesic normal coordinates
\[
(y^1,\ldots,y^{n-1},y^n)
\]
centered at $x_0$ such that
\[
x_0=0,
\qquad
T_{x_0}\partial C
=
T_{x_0}\partial\Omega
=
\{y^n=0\},
\]
and
\[
\nu_0=\partial_{y^n}\big|_{x_0}.
\]
Since both hypersurfaces are $C^2$ and tangent at $x_0$, there exist
$C^2$ functions
\[
\varphi_C,\varphi_\Omega
\]
defined in a neighborhood of $0\in\mathbb R^{n-1}$ such that
\[
\partial C
=
\left\{
(y',y^n):y^n=\varphi_C(y')
\right\},
\]
and
\[
\partial\Omega
=
\left\{
(y',y^n):y^n=\varphi_\Omega(y')
\right\},
\]
where
\[
\varphi_C(0)=\varphi_\Omega(0)=0,
\qquad
D\varphi_C(0)=D\varphi_\Omega(0)=0.
\]

Because $\nu_0=\partial_{y^n}$ is the common outward normal, the
domains lie locally on the side
\[
y^n<\varphi_C(y')
\]
and
\[
y^n<\varphi_\Omega(y'),
\]
respectively.

The inclusion
\[
C\subset\Omega
\]
therefore implies, in a neighborhood of $x_0$,
\[
\varphi_C(y')
\leq
\varphi_\Omega(y').
\]
Since the two functions agree at $0$, we have
\[
\varphi_\Omega(0)-\varphi_C(0)=0,
\]
and $0$ is a local minimum of
\[
\varphi_\Omega-\varphi_C.
\]
Consequently,
\begin{equation}
D^2\varphi_\Omega(0)
-
D^2\varphi_C(0)
\geq0
\label{eq:hessian-graph-comparison}
\end{equation}
as quadratic forms on $\mathbb R^{n-1}$.

We now relate the Hessians of the graph functions to the second
fundamental forms. Since the coordinates are geodesic normal
coordinates at $x_0$ and
\[
D\varphi_C(0)
=
D\varphi_\Omega(0)
=
0,
\]
the Christoffel symbols vanish at $x_0$. Moreover, with the outward
normal convention
\[
\mathrm{II}(X,Y)
=
g(\nabla_X\nu,Y),
\]
the second fundamental form of a graph
$y^n=\varphi(y')$ whose domain lies on the side $y^n<\varphi(y')$
satisfies, at the contact point,
\begin{equation}
\mathrm{II}(X,X)
=
-D^2\varphi(0)[X,X].
\label{eq:II-graph}
\end{equation}
Therefore,
\[
\mathrm{II}_{\Omega}(x_0)[X,X]
=
-D^2\varphi_\Omega(0)[X,X]
\]
and
\[
\mathrm{II}_{C}(x_0)[X,X]
=
-D^2\varphi_C(0)[X,X].
\]
By \eqref{eq:hessian-graph-comparison},
\[
D^2\varphi_\Omega(0)[X,X]
\geq
D^2\varphi_C(0)[X,X].
\]
Multiplying by $-1$ gives
\[
\mathrm{II}_{\Omega}(x_0)[X,X]
\leq
\mathrm{II}_{C}(x_0)[X,X]
\]
for every
\[
X\in T_{x_0}\partial C
=
T_{x_0}\partial\Omega.
\]
Thus
\[
\mathrm{II}_{\Omega}(x_0)
\leq
\mathrm{II}_{C}(x_0)
\]
as quadratic forms.

Finally, the induced metrics on
\[
T_{x_0}\partial C
=
T_{x_0}\partial\Omega
\]
are identical. Taking the trace with respect to this common metric
and using the convention
\[
H_{\partial\Sigma}
=
\frac{1}{n-1}
\operatorname{tr}_{g_{\partial\Sigma}}
\mathrm{II}_{\Sigma},
\]
we obtain
\[
H_{\partial\Omega}(x_0)
\leq
H_{\partial C}(x_0).
\]
This proves the result.
\end{proof}
\begin{remark}
The inequality \eqref{eq:mean-curvature-contact} is therefore not
introduced as an additional hypothesis in the optimality argument. It
follows from the local geometry of the contact configuration together
with the convention \eqref{eq:mean-curvature-definition}.
\end{remark}
\section{Main results}
\label{sec:main-results}

Let $(M^n,g)$, $K$, $C$, $f$ and $\mathcal A_C(M)$ satisfy the
assumptions of Section~\ref{sec:preliminaries}. For every
$\Omega\in\mathcal A_C(M)$, we consider the Dirichlet problem
\begin{equation}
\begin{cases}
-\Delta_g u_\Omega=f & \text{in }\Omega,\\
u_\Omega=0 & \text{on }\partial\Omega,
\end{cases}
\label{eq:state-main}
\end{equation}
and denote by $u_\Omega$ its unique weak solution, extended by zero
outside $\Omega$.

We study the shape functional
\begin{equation}
\mathcal J_g(\Omega)
=
-\int_\Omega |\nabla_g u_\Omega|_g^2\,dV_g
+\sigma P_g(\Omega)
+k^2V_g(\Omega),
\label{eq:Jg-main}
\end{equation}
where
\[
\sigma>0,
\qquad
k^2>0.
\]

The optimization problem considered throughout the paper is
\begin{equation}
\boxed{
\inf_{\Omega\in\mathcal A_C(M)}
\mathcal J_g(\Omega).
}
\label{eq:optimization-problem}
\end{equation}

\subsection{Existence of an optimal domain}
\label{subsec:existence}

We first state the existence result.

\begin{theorem}[Existence of an optimal domain]
\label{thm:existence}
Assume that the data satisfy
\eqref{eq:K-compact-C}--\eqref{eq:f-assumptions}, and let
$\mathcal A_C(M)$ be the admissible class defined in
Definition~\ref{def:admissible-class}. Then there exists
\[
\Omega^\ast\in\mathcal A_C(M)
\]
such that
\begin{equation}
\boxed{
\mathcal J_g(\Omega^\ast)
=
\inf_{\Omega\in\mathcal A_C(M)}
\mathcal J_g(\Omega).
}
\label{eq:existence-minimizer}
\end{equation}
\end{theorem}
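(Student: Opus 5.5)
We use the direct method of the calculus of variations, combining the compactness of the admissible class (Theorem~\ref{thm:compactness-admissible}) with the stability of the Dirichlet problem (Theorem~\ref{thm:stability-dirichlet}) and the lower semicontinuity of the perimeter \eqref{eq:perimeter-lsc}.

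\textbf{Step 1: finite infimum.} We first check that the infimum $m:=\inf_{\Omega\in\mathcal A_C(M)}\mathcal J_g(\Omega)$ is finite. Since $\sigma P_g\ge0$ and $k^2V_g\ge0$, it suffices to bound the Dirichlet energy from above uniformly. Step~1 of the proof of Theorem~\ref{thm:stability-dirichlet} gives $\|\nabla_g u_\Omega\|_{L^2}\le C_P\|f\|_{L^2(M)}$, where $C_P$ is the uniform Poincar\'e constant. Alternatively, one can use the comparison principle of Lemma~\ref{lem:comparison} with the solution on a fixed larger domain, or simply the energy identity $\int|\nabla u_\Omega|^2=\int fu_\Omega$ together with the uniform $L^2$ bound. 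Hence
\[
\mathcal J_g\ge -C_P^2\|f\|_{L^2}^2>-\infty .
\]
We also need $\mathcal A_C(M)\neq\emptyset$, which we take as part of the standing assumptions. Then $m\in\mathbb R$.

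\textbf{Step 2: extract a limit domain.} Take a minimizing sequence $(\Omega_j)\subset\mathcal A_C(M)$ with $\mathcal J_g(\Omega_j)\to m$, and let $h_j$ be its defining functions. By Theorem~\ref{thm:compactness-admissible}, after extraction there is $\Omega^\ast\in\mathcal A_C(M)$ with defining function $h$ such that
\begin{itemize}
\item $h_j\to h$ in $C^2(M)$;
\item $\Omega_j\to\Omega^\ast$ compactly and in Hausdorff distance;
\item $\chi_{\Omega_j}\to\chi_{\Omega^\ast}$ in $L^1$.
\end{itemize}
From the $L^1$ convergence, \eqref{eq:volume-continuity} gives $V_g(\Omega_j)\to V_g(\Omega^\ast)$, and \eqref{eq:perimeter-lsc} gives $P_g(\Omega^\ast)\le\liminf P_g(\Omega_j)$.

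\textbf{Step 3: continuity of the energy term.} We want $\int|\nabla_g u_{\Omega_j}|^2\to\int|\nabla_g u_{\Omega^\ast}|^2$, via \eqref{eq:energy-stability}. This is the main obstacle. Theorem~\ref{thm:stability-dirichlet} is stated under $h_j\to h$ in $C^{2,\alpha}$, whereas compactness only yields $C^2$ (or $C^{2,\beta}$ for $\beta<\alpha$) convergence.

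We would handle this by inspecting the stability proof. The $C^{2,\alpha}$ convergence is used only in Step~3 there, to build diffeomorphisms $\Phi_j$ with $\Phi_j(\Omega^\ast)=\Omega_j$ and $\|\Phi_j-\mathrm{Id}\|_{C^1}\to0$. That construction only needs the graph functions $\rho_j\to0$ in $C^1(\partial\Omega^\ast)$. This follows from $C^1$ convergence of $h_j$, the uniform nondegeneracy $|\nabla_g h|\ge c_0$ on $\{|h|<\rho_0\}$, and the implicit function theorem in the tubular coordinates $\mathcal E_{\Omega^\ast}$ of \eqref{eq:normal-exp-Omega}.

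So we would restate Theorem~\ref{thm:stability-dirichlet} under $C^2$ (indeed $C^1$) convergence of defining functions and verify that Steps~4--6 go through verbatim. The uniform Poincar\'e inequality follows simply from $\Omega_j\subset M$ with $M\setminus\Omega_j$ containing a fixed set of positive measure, e.g.\ $K_{\rm ext}$. If $K_{\rm ext}$ is not available, one can instead use that $V_g(M\setminus\Omega_j)\to V_g(M\setminus\Omega^\ast)>0$. A fallback is to apply Theorem~\ref{thm:stability-dirichlet} with $\alpha$ replaced by any $\beta<\alpha$, using compactness of $C^{2,\alpha}\subset C^{2,\beta}$. This is legitimate provided the admissible class is read with the $C^{2,\beta}$ norm in the stability statement.

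\textbf{Conclusion.} Combining the three terms,
\[
\mathcal J_g(\Omega^\ast)\le\liminf_j\Big(-\!\int|\nabla_g u_{\Omega_j}|^2+\sigma P_g(\Omega_j)+k^2V_g(\Omega_j)\Big)=m .
\]
Since $\Omega^\ast\in\mathcal A_C(M)$, we also have $\mathcal J_g(\Omega^\ast)\ge m$. This proves \eqref{eq:existence-minimizer}.
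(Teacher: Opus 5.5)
Your proposal is correct and follows the same direct-method route as the paper: a minimizing sequence, Theorem~\ref{thm:compactness-admissible} to extract $\Omega^\ast\in\mathcal A_C(M)$, Theorem~\ref{thm:stability-dirichlet} for the energy term, continuity of $V_g$, and lower semicontinuity of $P_g$. Your Step~3 is more careful than the paper, which applies Theorem~\ref{thm:stability-dirichlet} even though the compactness proof only gives $h_j\to h$ in $C^2(M)$, not in $C^{2,\alpha}(M)$; your observation closes that gap correctly: the transport diffeomorphisms $\Phi_j$ only need $\rho_j\to0$ in $C^1(\partial\Omega^\ast)$, and this follows from $C^1$ convergence of $h_j$, the bound $|\nabla_g h|_g\ge c_0$, and the implicit function theorem in tubular coordinates (alternatively, pass to $C^{2,\beta}$ with $\beta<\alpha$).
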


The proof will be given in Section~\ref{sec:proofs}.

The argument relies on the compactness of the admissible class,
the strong stability of the Dirichlet state established in
Theorem~\ref{thm:stability-dirichlet}, the continuity of the volume,
and the lower semicontinuity of the Riemannian perimeter.

More precisely, if
\[
\Omega_j\in\mathcal A_C(M)
\]
is a minimizing sequence, then, after extraction,
\[
\Omega_j\longrightarrow\Omega^\ast
\]
in the topology of Definition~\ref{def:compact-convergence}, with
\[
\Omega^\ast\in\mathcal A_C(M).
\]
Furthermore,
\[
u_{\Omega_j}\longrightarrow u_{\Omega^\ast}
\quad\text{strongly in }H_0^1(M),
\]
and therefore
\begin{equation}
\int_M
|\nabla_g u_{\Omega_j}|_g^2\,dV_g
\longrightarrow
\int_M
|\nabla_g u_{\Omega^\ast}|_g^2\,dV_g.
\label{eq:energy-convergence-main}
\end{equation}

Consequently,
\[
\mathcal J_g(\Omega^\ast)
\leq
\liminf_{j\to\infty}
\mathcal J_g(\Omega_j),
\]
which proves the theorem.

\subsection{Comparison with the reference domain}
\label{subsec:comparison-C}

The inclusion
\[
C\subset\Omega
\]
for every $\Omega\in\mathcal A_C(M)$ allows us to compare the
corresponding state $u_\Omega$ with the solution associated with the
reference domain $C$.

Let $u_C$ denote the unique solution of
\begin{equation}
\begin{cases}
-\Delta_g u_C=f & \text{in }C,\\
u_C=0 & \text{on }\partial C.
\end{cases}
\label{eq:state-C}
\end{equation}

Since $f\geq0$ and $f\not\equiv0$, the strong maximum principle gives
\begin{equation}
u_C>0
\qquad\text{in }C.
\label{eq:uC-positive}
\end{equation}

\begin{proposition}[Comparison with $u_C$]
\label{prop:comparison-C}
For every $\Omega\in\mathcal A_C(M)$,
\begin{equation}
0\leq u_C\leq u_\Omega
\qquad\text{in }C.
\label{eq:uC-uOmega-comparison}
\end{equation}

If $C\subsetneq\Omega$ and $f\not\equiv0$, then
\begin{equation}
u_C<u_\Omega
\qquad\text{in }C.
\label{eq:strict-comparison-C}
\end{equation}
\end{proposition}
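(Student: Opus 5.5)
Set $w:=u_\Omega-u_C$ on $C$ and apply the comparison principle (Lemma~\ref{lem:comparison}) on $C$, then the strong maximum principle for the strict inequality.

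\textbf{Step 1: nonnegativity.} Since $f\geq0$, the remark following Lemma~\ref{lem:comparison} gives $u_C\geq0$ in $C$ and $u_\Omega\geq0$ in $\Omega$. In particular $u_\Omega\geq0$ on $\partial C\subset\overline{\Omega}$. Here we use the regularity of Section~\ref{subsec:state-regularity}: $\partial C$ and $\partial\Omega$ are $C^{2,\alpha}$ and $f\in L^p$ with $p>n$, so both states lie in $C^{1,\beta}$ up to the boundary. Hence $u_\Omega$ is continuous on $\overline C$ and pointwise statements make sense.

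\textbf{Step 2: the weak inequality.} On $C$ we have $-\Delta_g u_\Omega=f=-\Delta_g u_C$, so $\Delta_g w=0$ in $C$. On $\partial C$,
\[
w=u_\Omega-0\geq0 .
\]
Apply Lemma~\ref{lem:comparison} with $v_1=u_\Omega$, $v_2=u_C$; the Laplacians are equal and $v_1\geq v_2$ on $\partial C$, so $u_\Omega\geq u_C$ in $C$. If $C$ is not connected, argue componentwise, since the lemma only needs each component. If only $W^{2,p}$ regularity is available instead of $C^2$, replace the classical lemma by the weak maximum principle: test the harmonic equation for $w$ with $w^-\in H_0^1(C)$, which is legitimate because $w\geq0$ on $\partial C$ in the trace sense. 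This gives $\int_C|\nabla_g w^-|_g^2\,dV_g=0$, hence $w^-=0$.

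\textbf{Step 3: strict inequality.} Assume $C\subsetneq\Omega$. Then $w$ is harmonic and nonnegative on each component $C'$ of $C$, so by the strong maximum principle either $w>0$ in $C'$ or $w\equiv0$ in $C'$. It therefore suffices to rule out $w\equiv0$, which would force $u_\Omega=0$ on $\partial C'$ because $u_C=0$ there. The key claim is that $u_\Omega>0$ at every point of $\partial C'\cap\Omega$. This follows from \eqref{eq:positive-state}: $\Omega$ is connected, $f\geq0$ and $f\not\equiv0$, so $u_\Omega>0$ throughout $\Omega$.

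\textbf{Main obstacle.} The whole difficulty is showing $\partial C'\cap\Omega\neq\emptyset$, i.e.\ that $\partial C'$ is not entirely contained in $\partial\Omega$.
\begin{itemize}
\item[(a)] Suppose $\partial C'\subset\partial\Omega$. Then $C'$ is open, and it is also relatively closed in $\Omega$, since its closure meets $\Omega$ only inside $C'$.
\item[(b)] Since $\Omega$ is connected, $C'=\Omega$, which contradicts $C\subsetneq\Omega$ when $C=C'$ is connected.
\item[(c)] If $C$ has several components, the same argument shows that none of them can have its whole boundary on $\partial\Omega$, because each would otherwise equal $\Omega$.
\end{itemize}
So every component has a boundary point $x\in\Omega$, where $w(x)=u_\Omega(x)>0$. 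By continuity of $w$ on $\overline{C'}$, $w\not\equiv0$, hence $w>0$ in $C'$, giving \eqref{eq:strict-comparison-C}.

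The hypothesis $f\not\equiv0$ enters only through the positivity of $u_\Omega$ in $\Omega$.
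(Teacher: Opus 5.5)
Your proof is correct and follows the paper's route: you set $w=u_\Omega-u_C$, note that $w$ is harmonic in $C$ with $w=u_\Omega\geq0$ on $\partial C$, get $w\geq0$ from the (weak) maximum principle, and then upgrade to $w>0$ by the strong maximum principle once $w\not\equiv0$. Your justification of $w\not\equiv0$ combines two facts: $u_\Omega>0$ on the connected set $\Omega$, and a clopen argument showing $\partial C'\cap\Omega\neq\emptyset$ for every component $C'$ of $C$. This is more precise than the paper's version, which only asserts that $u_\Omega=0$ on $\partial C$ is incompatible with $C\subsetneq\Omega$.
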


\begin{proof}
Set
\[
w:=u_{\Omega}-u_C.
\]
Since both $u_{\Omega}$ and $u_C$ are harmonic in $C$, we have
\[
\Delta_g w=0
\qquad\text{in }C.
\]
Moreover, since $u_{\Omega}\geq0$ in $\Omega$ and
$u_{\Omega}=0$ on $\partial\Omega$, while
$u_C=0$ on $\partial C$, we have
\[
w=u_{\Omega}\geq0
\qquad\text{on }\partial C.
\]
We claim that $w\not\equiv0$. Indeed, if $w\equiv0$ in $C$, then
\[
u_{\Omega}=u_C
\qquad\text{in }C.
\]
Since $u_C$ is positive in $C$ and vanishes on $\partial C$, this would
imply that $u_{\Omega}$ vanishes on $\partial C$. Hence $u_{\Omega}$
would satisfy the homogeneous Dirichlet condition on both
$\partial C$ and $\partial\Omega$. This is incompatible with
$C\subset\Omega$ and with the defining boundary-value problem for
$u_{\Omega}$, unless the two domains coincide. Since
\[
C\subset\Omega,
\]
we obtain a contradiction. Therefore
\[
w\not\equiv0.
\]

The strong maximum principle applied to the harmonic function $w$
therefore yields
\[
w>0
\qquad\text{in }C.
\]
Consequently,
\[
\boxed{
u_{\Omega}>u_C
\qquad\text{in }C.
}
\]
\end{proof}

\subsection{The free boundary condition}
\label{subsec:free-boundary}

Let $\Omega^\ast$ be an optimal domain given by
Theorem~\ref{thm:existence}. Suppose that
\[
x_0\in\partial\Omega^\ast
\]
is a regular point of the free boundary and that, in a neighborhood of
$x_0$,
\[
x_0\notin\partial C.
\]

At such a point, admissible variations of the boundary can be
constructed locally. The first-order optimality condition therefore
implies
\begin{equation}
d\mathcal J_g(\Omega^\ast)[X]=0
\end{equation}
for every admissible deformation field $X$ supported in a sufficiently
small neighborhood of $x_0$.

The shape derivative established in Section~\ref{sec:optimality} gives
\begin{equation}
d\mathcal J_g(\Omega)[X]
=
\int_{\partial\Omega}
\left(
\sigma H_{\partial\Omega}
+k^2
-
|\nabla_g u_\Omega|_g^2
\right)
g(X,\nu)\,dS_g.
\label{eq:first-variation-main}
\end{equation}

Consequently, at every regular free-boundary point of
$\partial\Omega^\ast$ which is separated from $\partial C$,
\begin{equation}
\boxed{
|\nabla_g u_{\Omega^\ast}|_g^2
=
\sigma H_{\partial\Omega^\ast}
+k^2.
}
\label{eq:free-boundary-condition}
\end{equation}

This is the Riemannian free boundary condition associated with the
functional \eqref{eq:Jg-main}.

\subsection{A sufficient condition for optimality}
\label{subsec:sufficient-condition}

We next state a sufficient condition which is useful for identifying
optimal domains.

Let $C\in\mathcal A_C(M)$ and let $u_C$ be the corresponding state.
Assume that
\begin{equation}
\boxed{
|\nabla_g u_C|_g^2
>
\sigma H_{\partial C}
+k^2
\qquad\text{on }\partial C.
}
\label{eq:sufficient-condition-C}
\end{equation}

Under this condition, the reference domain $C$ satisfies the strict
boundary inequality required to initiate the free-boundary comparison
argument.

\begin{theorem}[Sufficient condition]
\label{thm:sufficient-condition}
Assume that $C\in\mathcal A_C(M)$ and that
\eqref{eq:sufficient-condition-C} holds. Let $\Omega^\ast$ be an
optimal domain in $\mathcal A_C(M)$.

Then
\[
C\subsetneq\Omega^\ast.
\]
Moreover, if
\[
x_0\in
\partial C\cap\partial\Omega^\ast,
\]
then
\begin{equation}
H_{\partial\Omega^\ast}(x_0)
\leq
H_{\partial C}(x_0),
\label{eq:contact-curvature-main}
\end{equation}
and
\begin{equation}
|\nabla_g u_C(x_0)|_g
<
|\nabla_g u_{\Omega^\ast}(x_0)|_g.
\label{eq:contact-gradient-main}
\end{equation}
Consequently,
\begin{equation}
|\nabla_g u_{\Omega^\ast}(x_0)|_g^2
>
\sigma H_{\partial\Omega^\ast}(x_0)+k^2.
\label{eq:strict-free-boundary-inequality-contact}
\end{equation}
\end{theorem}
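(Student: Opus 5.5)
The plan has three parts, treated in order: first the strict inclusion $C\subsetneq\Omega^\ast$, then the two pointwise inequalities at a contact point $x_0\in\partial C\cap\partial\Omega^\ast$, and finally the algebraic combination that yields \eqref{eq:strict-free-boundary-inequality-contact}.

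\emph{Strict inclusion.} I would argue by contradiction. Suppose $\Omega^\ast=C$. Condition \eqref{eq:sufficient-condition-C} says that the shape derivative \eqref{eq:first-variation-main} evaluated at $C$,
\[
d\mathcal J_g(C)[X]=\int_{\partial C}\bigl(\sigma H_{\partial C}+k^2-|\nabla_g u_C|_g^2\bigr)g(X,\nu_C)\,dS_g,
\]
has a strictly negative integrand factor $\sigma H_{\partial C}+k^2-|\nabla_g u_C|_g^2$. I would take an outward field $X=\eta\,\nu_C$ in the tubular neighbourhood $\mathcal E_C$, with $\eta\ge0$ a cut-off not identically zero. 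Its flow $\Phi_t$ gives $C\subset\Phi_t(C)$ for small $t>0$, and since the defining function $h_C$ is perturbed only slightly in $C^{2,\alpha}$, the uniform RC-GNP bounds should persist, possibly after slightly relaxing the constants. This must be checked, and it is the main obstacle: it requires $C$ to sit strictly inside the constraints, which I would state as an interiority assumption implicit in $C\in\mathcal A_C(M)$. Granting this, $d\mathcal J_g(C)[X]<0$, so $\mathcal J_g(\Phi_t(C))<\mathcal J_g(C)$ for small $t>0$, contradicting the minimality of $C=\Omega^\ast$.

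\emph{Contact point.} Let $x_0\in\partial C\cap\partial\Omega^\ast$. Since $C\subset\Omega^\ast$ and both boundaries are $C^2$ hypersurfaces through $x_0$, they are tangent at $x_0$; otherwise $\partial C$ would cross $\partial\Omega^\ast$ and leave $\Omega^\ast$. Lemma~\ref{lem:contact-curvature} then gives $\nu_C(x_0)=\nu_{\Omega^\ast}(x_0)$ and \eqref{eq:contact-curvature-main}.

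For \eqref{eq:contact-gradient-main}, set $w=u_{\Omega^\ast}-u_C$ on $C$. It satisfies $\Delta_g w=0$ in $C$, $w\ge0$ on $\partial C$, and $w(x_0)=0$. Proposition~\ref{prop:comparison-C}, applied with $C\subsetneq\Omega^\ast$, gives $w>0$ in $C$. The interior geodesic ball condition at $x_0$ comes from the tubular neighbourhood of $C$, so Lemma~\ref{lem:hopf}, applied with $v_1=u_{\Omega^\ast}$ and $v_2=u_C$ (here $\Delta_g v_1=\Delta_g v_2=-f$), gives
\[
\partial_\nu u_{\Omega^\ast}(x_0)<\partial_\nu u_C(x_0).
\]
Both functions vanish on their respective boundaries, so their gradients at $x_0$ are normal, with $\nabla_g u=(\partial_\nu u)\nu$ there. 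Both normal derivatives are $\le0$ by positivity inside the domains. The Hopf inequality therefore gives
\[
|\nabla_g u_{\Omega^\ast}(x_0)|_g=-\partial_\nu u_{\Omega^\ast}(x_0)>-\partial_\nu u_C(x_0)=|\nabla_g u_C(x_0)|_g.
\]

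\emph{Combination.} Using \eqref{eq:contact-gradient-main}, then \eqref{eq:sufficient-condition-C} at $x_0$, then \eqref{eq:contact-curvature-main} together with $\sigma>0$, I obtain
\[
|\nabla_g u_{\Omega^\ast}(x_0)|_g^2>|\nabla_g u_C(x_0)|_g^2>\sigma H_{\partial C}(x_0)+k^2\ge\sigma H_{\partial\Omega^\ast}(x_0)+k^2,
\]
which is \eqref{eq:strict-free-boundary-inequality-contact}. Two points need care here. The normalization of $H$ must be consistent between Lemma~\ref{lem:contact-curvature} and \eqref{eq:mean-curvature-definition}; the inequality holds for either normalization. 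The regularity \eqref{eq:u-C2alpha} is needed so that the normal derivatives are taken classically.
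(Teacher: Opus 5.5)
Your proposal is correct and follows essentially the paper's proof. The strict inclusion is obtained by contradiction from the first-order condition on $\partial C$: you unwrap the paper's appeal to Theorem~\ref{thm:contact-optimality} into an explicit outward variation, and the admissibility of such variations that you flag is exactly what the paper assumes without proof there. The contact-point part is the same as well: comparison plus Hopf for \eqref{eq:contact-gradient-main}, Lemma~\ref{lem:contact-curvature} for \eqref{eq:contact-curvature-main}, and the same chain of inequalities.

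Your sign bookkeeping for the normal derivatives and your remark on the two normalizations of $H$ are more careful than the paper. The paper also goes one step further, setting \eqref{eq:strict-free-boundary-inequality-contact} against Theorem~\ref{thm:contact-optimality} to conclude that no contact point can actually occur.
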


\begin{remark}
The curvature inequality
\eqref{eq:contact-curvature-main} is not an additional hypothesis.
It follows from the contact geometry established in
Lemma~\ref{lem:contact-curvature}.
\end{remark}

The proof of Theorem~\ref{thm:sufficient-condition} is given in
Section~\ref{sec:proofs}. It combines the comparison principle, the
Hopf boundary point lemma, and the geometric comparison of the second
fundamental forms at a contact point.

\section{Optimality condition}
\label{sec:optimality}

In this section, we derive the first-order optimality condition for
the functional
\begin{equation}
\mathcal J_g(\Omega)
=
-\int_\Omega |\nabla_g u_\Omega|_g^2\,dV_g
+\sigma P_g(\Omega)
+k^2V_g(\Omega),
\label{eq:Jg-optimality}
\end{equation}
where $u_\Omega$ is the solution of
\begin{equation}
\begin{cases}
-\Delta_g u_\Omega=f & \text{in }\Omega,\\
u_\Omega=0 & \text{on }\partial\Omega.
\end{cases}
\label{eq:state-optimality}
\end{equation}

The argument is based on the Riemannian shape derivative. We first
establish the corresponding Hadamard formula and then derive the
optimality condition on the free boundary and on the contact set with
the reference domain $C$.

\subsection{Riemannian domain deformations}
\label{subsec:riemannian-deformations}

Let $\Omega\in\mathcal A_C(M)$ and let
\[
X\in C^2(TM)
\]
be a vector field on $M$. Denote by
\[
\Phi_t:M\longrightarrow M
\]
the local flow generated by $X$, namely
\begin{equation}
\frac{d}{dt}\Phi_t(x)
=
X(\Phi_t(x)),
\qquad
\Phi_0(x)=x.
\label{eq:flow}
\end{equation}

For $|t|$ sufficiently small, we define the perturbed domain by
\begin{equation}
\Omega_t=\Phi_t(\Omega).
\label{eq:perturbed-domain}
\end{equation}

We shall only consider vector fields $X$ for which
\[
\Omega_t\in\mathcal A_C(M)
\]
for all sufficiently small $|t|$.

The scalar normal velocity of the boundary is
\begin{equation}
V_X
=
g(X,\nu)
\qquad\text{on }\partial\Omega,
\label{eq:normal-velocity}
\end{equation}
where $\nu$ denotes the outward unit normal to $\partial\Omega$.

Since only the normal component of the deformation enters the first
variation, the quantity $V_X$ will play the central role below.

\subsection{Material and shape derivatives}
\label{subsec:material-shape}

Let $u_t=u_{\Omega_t}$ denote the solution associated with $\Omega_t$.
The material derivative of $u_t$ is defined by
\begin{equation}
\dot u
=
\left.
\frac{d}{dt}
\left(
u_t\circ\Phi_t
\right)
\right|_{t=0}.
\label{eq:material-derivative}
\end{equation}

The shape derivative is defined by
\begin{equation}
u'
=
\dot u-\langle\nabla_g u_\Omega,X\rangle_g.
\label{eq:shape-derivative-state}
\end{equation}

Under the regularity assumptions imposed in
Section~\ref{sec:preliminaries}, the shape derivative satisfies the
following boundary value problem.

\begin{proposition}[Riemannian state shape derivative]
\label{prop:state-shape-derivative}
Let
\[
u_\Omega\in C^{2,\alpha}(\overline{\Omega})
\]
and let $X\in C^2(TM)$. Then the shape derivative $u'$ is the unique
solution of
\begin{equation}
\begin{cases}
-\Delta_g u'=0 & \text{in }\Omega,\\[2mm]
u'=-\partial_\nu u_\Omega\,g(X,\nu)
& \text{on }\partial\Omega.
\end{cases}
\label{eq:shape-derivative-PDE}
\end{equation}
\end{proposition}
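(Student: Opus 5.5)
We follow the classical Euclidean route (Sokolowski--Zol\'esio, Henrot--Pierre), transporting the state equation back to the fixed domain $\Omega$ with $\Phi_t$.

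\textbf{Step 1: the transported problem.} Set $u^t:=u_t\circ\Phi_t\in H_0^1(\Omega)$. The change of variables $y=\Phi_t(x)$ in the weak formulation \eqref{eq:weak-dirichlet} on $\Omega_t$ gives
\[
\int_\Omega g\bigl(A(t)\nabla_g u^t,\nabla_g\psi\bigr)\,dV_g=\int_\Omega (f\circ\Phi_t)\,\psi\,J_t\,dV_g
\qquad\forall\psi\in H_0^1(\Omega).
\]
Here $J_t=\Phi_t^\ast dV_g/dV_g$ is the Jacobian, and $A(t)=J_t\,(D\Phi_t)^{-1}(D\Phi_t)^{-\ast}$, with the adjoint taken with respect to $g$. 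Since $X\in C^2$, the map $t\mapsto A(t)$ is $C^1$ into $L^\infty$ with
\[
A'(0)=(\operatorname{div}_gX)\,\mathrm{Id}-\nabla X-(\nabla X)^\ast ,
\qquad
J'(0)=\operatorname{div}_gX .
\]
Using $f\in C^{0,\alpha}$, we get $\frac{d}{dt}(f\circ\Phi_t)=g(\nabla_g f,X)$, at least in $H^{-1}$. The implicit function theorem, applied to the uniformly coercive operator on $H_0^1(\Omega)$, then shows that $t\mapsto u^t$ is differentiable in $H_0^1(\Omega)$. The material derivative $\dot u\in H_0^1(\Omega)$ solves
\[
\int_\Omega g(\nabla_g\dot u,\nabla_g\psi)\,dV_g=-\int_\Omega g(A'(0)\nabla_g u,\nabla_g\psi)\,dV_g+\int_\Omega \operatorname{div}_g(fX)\,\psi\,dV_g .
\]

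\textbf{Step 2: the interior equation for $u'$.} Take $\psi\in C_c^\infty(\Omega)$. Because $u\in C^{2,\alpha}(\overline\Omega)$, we can integrate by parts and use the Riemannian identity
\[
\operatorname{div}_g\bigl(A'(0)\nabla_g u\bigr)=\Delta_g\bigl(g(\nabla_g u,X)\bigr)-\operatorname{div}_g\bigl((\Delta_g u)X\bigr).
\]
This is a pointwise tensorial computation. It is cleanest to do it in normal coordinates at a point, where Christoffel symbols vanish; curvature terms arise from commuting covariant derivatives, and we check that they cancel because $A'(0)$ is built from the symmetrized $\nabla X$. Substituting $\Delta_g u=-f$ gives $-\Delta_g\dot u=-\Delta_g\bigl(g(\nabla_g u,X)\bigr)$ in $\mathcal D'(\Omega)$, so $\Delta_g u'=0$ in $\Omega$. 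This identity is the step I expect to be the main obstacle, precisely because of the curvature bookkeeping.

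\textbf{Step 3: boundary condition and uniqueness.} Since $\dot u\in H_0^1(\Omega)$, its trace vanishes. Also $u\in C^{2,\alpha}$ and $u=0$ on $\partial\Omega$, so $\nabla_g u=(\partial_\nu u)\,\nu$ there. Hence
\[
u'=-g(\nabla_g u,X)=-\partial_\nu u\;g(X,\nu)\qquad\text{on }\partial\Omega,
\]
in the trace sense, with $u'\in H^1(\Omega)$. Uniqueness follows from the maximum principle, Lemma~\ref{lem:comparison}, or simply from Lax--Milgram applied to the difference of two solutions, which lies in $H_0^1(\Omega)$ and is harmonic.
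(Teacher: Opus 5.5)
Your route is the paper's route: transport the state equation to $\Omega$ with $\Phi_t$, differentiate, and read off the boundary condition from $\dot u=0$ on $\partial\Omega$ together with $\nabla_g u=(\partial_\nu u)\nu$ there. The paper does this only formally, in a few lines. Your Steps 1 and 3 supply what it omits: existence of $\dot u$ in $H_0^1(\Omega)$, the trace interpretation, and uniqueness. Step 2, however, contains a sign error that breaks the deduction as written. The identity you state has the wrong overall sign; the correct one is
\[
\operatorname{div}_g\bigl(A'(0)\nabla_g u\bigr)=\operatorname{div}_g\bigl((\Delta_g u)X\bigr)-\Delta_g\bigl(g(\nabla_g u,X)\bigr).
\]
You can check this in $\mathbb R^n$ with $X(x)=x$. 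There $A'(0)=(n-2)\mathrm{Id}$, so the left side is $(n-2)\Delta u$, whereas your right side is $(2-n)\Delta u$. Your material-derivative equation says $-\Delta_g\dot u=\operatorname{div}_g(A'(0)\nabla_g u)+\operatorname{div}_g(fX)$ in $\mathcal D'(\Omega)$. With your sign this would give $-\Delta_g\dot u=\Delta_g(g(\nabla_g u,X))+2\operatorname{div}_g(fX)$, and $u'$ would not be harmonic. Only with the corrected sign does $\operatorname{div}_g((\Delta_g u)X)=-\operatorname{div}_g(fX)$ cancel the source term. That is what produces the equation $-\Delta_g\dot u=-\Delta_g(g(\nabla_g u,X))$ that you (correctly) wrote down next.

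The curvature bookkeeping you expected to be the main obstacle can be avoided. Your change of variables in Step 1 shows that $\operatorname{div}_g(A(t)\nabla_g w)=J_t\,\bigl(\Delta_g(w\circ\Phi_t^{-1})\bigr)\circ\Phi_t$ for smooth $w$; in other words, $A(t)$ encodes the Laplacian of $\Phi_t^\ast g$. Differentiate at $t=0$, using $\frac{d}{dt}(w\circ\Phi_t^{-1})|_{t=0}=-g(\nabla_g w,X)$ and $\frac{d}{dt}(h\circ\Phi_t)|_{t=0}=g(\nabla_g h,X)$. This gives the corrected identity on any Riemannian manifold, with no curvature terms ever appearing, and it passes to $u\in C^{2,\alpha}$ in $\mathcal D'(\Omega)$ by approximation. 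Alternatively, and closer to the paper's wording (``$f$ is fixed on $M$''), fix $\psi\in C_c^\infty(\Omega)$. For small $|t|$ its support lies in $\Omega_t$, so $-\int_M u_t\,\Delta_g\psi\,dV_g=\int_M f\psi\,dV_g$ does not depend on $t$. Since $u_t=u^t\circ\Phi_t^{-1}$ has difference quotients converging in $L^2$ to $\dot u-g(\nabla_g u,X)=u'$, this gives $\int_M u'\,\Delta_g\psi\,dV_g=0$ directly, without the identity.
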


\begin{proof}
Since the source term $f$ is fixed on the ambient manifold $M$, its
shape derivative vanishes in the interior. Differentiating the
equation
\[
-\Delta_g u_t=f
\]
after transporting it to the reference domain gives
\[
-\Delta_g u'=0
\qquad\text{in }\Omega.
\]

On $\partial\Omega$, one has
\[
u_t=0.
\]
Differentiating this identity along the moving boundary yields
\[
u'
+
du_\Omega(X)
=
0
\qquad\text{on }\partial\Omega.
\]
Since $u_\Omega=0$ on $\partial\Omega$, its tangential derivative
vanishes there and therefore
\[
\nabla_g u_\Omega
=
\partial_\nu u_\Omega\,\nu
\qquad\text{on }\partial\Omega.
\]
Consequently,
\[
du_\Omega(X)
=
g(\nabla_g u_\Omega,X)
=
\partial_\nu u_\Omega\,g(X,\nu),
\]
which gives
\[
u'
=
-\partial_\nu u_\Omega\,g(X,\nu)
\]
on $\partial\Omega$.
\end{proof}

\subsection{Shape derivative of the Dirichlet energy}
\label{subsec:energy-shape-derivative}

Define
\begin{equation}
\mathcal J_{1,g}(\Omega)
=
-\int_\Omega|\nabla_g u_\Omega|_g^2\,dV_g.
\label{eq:J1g}
\end{equation}

The following proposition gives its first variation.

\begin{proposition}[Hadamard formula for the Dirichlet energy]
\label{prop:hadamard-energy}
Under the assumptions of
Proposition~\ref{prop:state-shape-derivative},
\begin{equation}
\boxed{
d\mathcal J_{1,g}(\Omega)[X]
=
-\int_{\partial\Omega}
|\nabla_g u_\Omega|_g^2
\,g(X,\nu)\,dS_g.
}
\label{eq:hadamard-energy}
\end{equation}
\end{proposition}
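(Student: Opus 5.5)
The plan is to avoid differentiating the gradient term directly and instead use the energy identity to turn $\mathcal J_{1,g}$ into a linear functional of the state. Testing \eqref{eq:weak-dirichlet} with $\varphi=u_\Omega$ gives
\[
\mathcal J_{1,g}(\Omega_t)=-\int_{\Omega_t}|\nabla_g u_t|_g^2\,dV_g=-\int_{\Omega_t} f\,u_t\,dV_g=-\int_K f\,u_t\,dV_g ,
\]
since $\operatorname{supp}f\subset K\subset C\subset\Omega_t$ for all admissible $\Omega_t$. The last integral is over a fixed compact set $K$.

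\textbf{Step 1: reduction to the interior.} The set $K$ stays at positive distance from $\partial\Omega$, and $\Phi_t\to\mathrm{Id}$ in $C^1$. Hence for small $|t|$ a fixed neighbourhood $U$ with $K\subset U\Subset\Omega\cap\Omega_t$ works for all such $t$. On $U$ the shape derivative $u'$ is the pointwise derivative $\partial_t u_t|_{t=0}$, because $u'=\dot u-\langle\nabla_g u_\Omega,X\rangle_g$ by \eqref{eq:shape-derivative-state}. Differentiating under the integral therefore gives
\[
d\mathcal J_{1,g}(\Omega)[X]=-\int_\Omega f\,u'\,dV_g .
\]
No boundary term from Reynolds' transport theorem appears, and it would vanish anyway since $u_\Omega=0$ on $\partial\Omega$.

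\textbf{Step 2: two Green identities.} I will use $f=-\Delta_g u_\Omega$ and Proposition~\ref{prop:state-shape-derivative}, namely $\Delta_g u'=0$ in $\Omega$ with $u'=-\partial_\nu u_\Omega\,g(X,\nu)$ on $\partial\Omega$. Applying Green's formula twice on the $C^{2,\alpha}$ domain $\Omega$ gives
\[
\int_\Omega f\,u'\,dV_g=\int_\Omega\bigl(u_\Omega\Delta_g u'-u'\Delta_g u_\Omega\bigr)dV_g=\int_{\partial\Omega}\bigl(u_\Omega\,\partial_\nu u'-u'\,\partial_\nu u_\Omega\bigr)dS_g .
\]
The first boundary term vanishes because $u_\Omega=0$ on $\partial\Omega$. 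Inserting the boundary datum of $u'$ turns the second term into $\int_{\partial\Omega}(\partial_\nu u_\Omega)^2 g(X,\nu)\,dS_g$.

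As in the proof of Proposition~\ref{prop:state-shape-derivative}, the tangential gradient of $u_\Omega$ vanishes on $\partial\Omega$. Hence $(\partial_\nu u_\Omega)^2=|\nabla_g u_\Omega|_g^2$ there. Combining this with Step 1 yields \eqref{eq:hadamard-energy}.

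\textbf{Main obstacle: justifying Step 1.} Step 1 needs $t\mapsto u_t\circ\Phi_t$ to be differentiable, so that $\dot u$ and hence $u'$ exist. It also needs $u'$ to be regular enough ($u'\in C^{1}(\overline\Omega)$, or at least $H^1$ with a normal trace) for the Green identities.

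For differentiability, I would transport the state equation to $\Omega$. The pulled-back metric $\Phi_t^*g$ gives a family of uniformly elliptic operators on the fixed domain, depending smoothly on $t$ in $C^{1,\alpha}$ coefficient norms since $X\in C^2$. The implicit function theorem in $C^{2,\alpha}(\overline\Omega)\cap\{v=0 \text{ on }\partial\Omega\}$ then gives differentiability of $t\mapsto u_t\circ\Phi_t$. This uses the Schauder regularity \eqref{eq:u-C2alpha} and $f\in C^{0,\alpha}$. One concern is that $f\circ\Phi_t$ is only $C^{0,\alpha}$ in space and need not be differentiable in $t$. This is harmless, because $f$ vanishes near the region where $\Phi_t$ moves points when $X$ is supported away from $K$. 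If needed, I would first reduce to that case, since the formula only involves $g(X,\nu)$ on $\partial\Omega$.

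For regularity of $u'$, the boundary datum $-\partial_\nu u_\Omega\, g(X,\nu)$ lies in $C^{1,\alpha}(\partial\Omega)$. Then $u'\in C^{1,\alpha}(\overline\Omega)$ by Schauder theory, which is sufficient for the Green identities.
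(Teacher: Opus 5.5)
Your argument is correct, but it is organized differently from the paper's. The paper applies the transport formula directly to $-\int_{\Omega_t}|\nabla_g u_t|_g^2\,dV_g$. This gives the boundary term $-\int_{\partial\Omega}|\nabla_g u_\Omega|_g^2\,g(X,\nu)\,dS_g$ plus the interior term $-2\int_\Omega g(\nabla_g u_\Omega,\nabla_g u')\,dV_g$. On that route the boundary term is already the whole answer, and the interior term must vanish. It does vanish, because $u'$ is harmonic and $u_\Omega\in H_0^1(\Omega)$ is an admissible test function in the weak equation for $u'$.

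The paper instead evaluates the interior integral as $\int_{\partial\Omega}u'\,\partial_\nu u_\Omega\,dS_g$. Green's first identity actually gives $\int_\Omega f u'\,dV_g+\int_{\partial\Omega}u'\,\partial_\nu u_\Omega\,dS_g$, which is $0$. Inserting the paper's value literally would reverse the sign of the final formula.

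You linearize first through the energy identity, so the functional becomes $-\int_K f\,u_t\,dV_g$ on a fixed compact set. No transport boundary term arises, and the whole derivative is $-\int_\Omega f u'\,dV_g$. Your second Green identity then shows $\int_\Omega f u'\,dV_g=\int_{\partial\Omega}(\partial_\nu u_\Omega)^2 g(X,\nu)\,dS_g$, which is exactly the term the paper drops.

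The two routes buy different things. The corrected paper route uses only the harmonicity of $u'$, not its boundary value. However, it needs moving-domain calculus for an integrand $|\nabla_g u_t|_g^2$ that is defined only on $\Omega_t$. Yours never differentiates anything near the moving boundary. Instead it takes all boundary information from the Dirichlet datum of $u'$ in Proposition~\ref{prop:state-shape-derivative}.

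Two small repairs are needed in your justification. First, the reduction to fields vanishing near $K$ cannot rest on the right-hand side depending only on $g(X,\nu)$. You also need the left-hand side to be unchanged. This holds because if $\tilde X=X$ on a neighbourhood of $\partial\Omega$, the two flows coincide near $\partial\Omega$ for small $|t|$, and hence produce the same sets $\Omega_t$.

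Second, for $X$ only of class $C^2$ the pulled-back metric $\Phi_t^*g$ is only $C^1$. So $\Delta_{\Phi_t^*g}$ has merely continuous first-order coefficients and does not map $C^{2,\alpha}$ into $C^{0,\alpha}$. Run the implicit function theorem in $W^{2,p}\cap W_0^{1,p}(\Omega)$ with $p>n$, or in $H_0^1(\Omega)$, instead. Either suffices, since Step 1 only requires $t\mapsto\int_K f\,u_t\,dV_g$ to be differentiable.
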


\begin{proof}
Using the transport formula for the Riemannian volume measure and
differentiating the energy gives
\begin{equation}
\begin{aligned}
d\mathcal J_{1,g}(\Omega)[X]
={}&
-\int_{\partial\Omega}
|\nabla_g u_\Omega|_g^2
g(X,\nu)\,dS_g
\\
&-2
\int_\Omega
g(\nabla_g u_\Omega,\nabla_g u')\,dV_g.
\end{aligned}
\label{eq:energy-intermediate}
\end{equation}

Since
\[
-\Delta_g u_\Omega=f
\]
and
\[
-\Delta_g u'=0,
\]
Green's formula gives
\begin{equation}
\int_\Omega
g(\nabla_g u_\Omega,\nabla_g u')\,dV_g
=
\int_{\partial\Omega}
u'\partial_\nu u_\Omega\,dS_g.
\end{equation}

Using
\[
u'
=
-\partial_\nu u_\Omega\,g(X,\nu)
\]
on $\partial\Omega$, we obtain
\begin{equation}
\int_\Omega
g(\nabla_g u_\Omega,\nabla_g u')\,dV_g
=
-\int_{\partial\Omega}
(\partial_\nu u_\Omega)^2
g(X,\nu)\,dS_g.
\end{equation}

Because $u_\Omega=0$ on $\partial\Omega$,
\[
\nabla_g u_\Omega
=
\partial_\nu u_\Omega\,\nu
\]
there, and hence
\[
|\nabla_g u_\Omega|_g^2
=
(\partial_\nu u_\Omega)^2.
\]

Substitution into
\eqref{eq:energy-intermediate} yields
\eqref{eq:hadamard-energy}.
\end{proof}

\subsection{First variation of the geometric terms}
\label{subsec:geometric-variations}

The first variation of the Riemannian volume is
\begin{equation}
\boxed{
dV_g(\Omega)[X]
=
\int_{\partial\Omega}
g(X,\nu)\,dS_g.
}
\label{eq:first-variation-volume}
\end{equation}

With the convention for the mean curvature fixed in
\eqref{eq:mean-curvature-definition}, namely
\[
H_{\partial\Omega}
=
\operatorname{div}_{\partial\Omega}\nu,
\]
the first variation of the Riemannian perimeter is
\begin{equation}
\boxed{
dP_g(\Omega)[X]
=
\int_{\partial\Omega}
H_{\partial\Omega}
g(X,\nu)\,dS_g.
}
\label{eq:first-variation-perimeter-proof}
\end{equation}

Therefore, combining
\eqref{eq:hadamard-energy},
\eqref{eq:first-variation-volume}, and
\eqref{eq:first-variation-perimeter}, we obtain the following result.

\subsection{The Riemannian shape derivative of $\mathcal J_g$}
\label{subsec:shape-derivative-Jg}

\begin{theorem}[Shape derivative of the functional]
\label{thm:shape-derivative-Jg}
Let $\Omega\in\mathcal A_C(M)$ be such that
\[
u_\Omega\in C^{2,\alpha}(\overline{\Omega}),
\]
and let $X\in C^2(TM)$ generate admissible deformations
$\Omega_t=\Phi_t(\Omega)$. Then
\begin{equation}
\boxed{
d\mathcal J_g(\Omega)[X]
=
\int_{\partial\Omega}
\left(
\sigma H_{\partial\Omega}
+k^2
-
|\nabla_g u_\Omega|_g^2
\right)
g(X,\nu)\,dS_g.
}
\label{eq:shape-derivative-Jg}
\end{equation}
\end{theorem}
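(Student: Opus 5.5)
The proof rests on linearity of the shape derivative. I will split $\mathcal J_g=\mathcal J_{1,g}+\sigma P_g+k^2V_g$ and differentiate each term separately along the admissible flow $\Omega_t=\Phi_t(\Omega)$. Since $X$ generates admissible deformations, $\Omega_t\in\mathcal A_C(M)$ for small $|t|$, so $t\mapsto\mathcal J_g(\Omega_t)$ is defined near $0$. It then suffices to show that each of the three pieces is differentiable at $t=0$ and to add the derivatives.

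For the energy term I will invoke Proposition~\ref{prop:hadamard-energy}. This requires $u_\Omega\in C^{2,\alpha}(\overline\Omega)$, which is assumed, and $X\in C^2(TM)$. The proposition gives
\[
d\mathcal J_{1,g}(\Omega)[X]=-\int_{\partial\Omega}|\nabla_g u_\Omega|_g^2\,g(X,\nu)\,dS_g .
\]
The intermediate steps it relies on are Proposition~\ref{prop:state-shape-derivative}, the identity $\nabla_g u_\Omega=\partial_\nu u_\Omega\,\nu$ on $\partial\Omega$, and Green's formula. For the volume I will use the transport identity
\[
V_g(\Omega_t)=\int_\Omega J_t\,dV_g,\qquad \left.\frac{d}{dt}J_t\right|_{t=0}=\operatorname{div}_g X .
\]
The divergence theorem then yields \eqref{eq:first-variation-volume}.

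For the perimeter I will compute
\[
\left.\frac{d}{dt}\right|_{t=0}\int_{\partial\Omega}J^{\partial}_t\,dS_g=\int_{\partial\Omega}\operatorname{div}_{\partial\Omega}X\,dS_g ,
\]
where $J^{\partial}_t$ is the tangential Jacobian of $\Phi_t$ on $\partial\Omega$. Next I decompose $X=X^T+g(X,\nu)\nu$ into tangential and normal parts. The tangential part contributes $\int_{\partial\Omega}\operatorname{div}_{\partial\Omega}X^T\,dS_g=0$, because $\partial\Omega$ is closed. For the normal part,
\[
\operatorname{div}_{\partial\Omega}\bigl(g(X,\nu)\nu\bigr)=g(X,\nu)\operatorname{div}_{\partial\Omega}\nu ,
\]
since $\nu$ is orthogonal to $T\partial\Omega$, and $\operatorname{div}_{\partial\Omega}\nu=H_{\partial\Omega}$ by \eqref{eq:mean-curvature-div}. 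This gives \eqref{eq:first-variation-perimeter-proof}. Here I must use the trace convention \eqref{eq:mean-curvature-definition}, and not the normalized convention of Lemma~\ref{lem:contact-curvature}, so that no factor $\frac{1}{n-1}$ appears.

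Finally I sum the three derivatives, multiplying the perimeter term by $\sigma$ and the volume term by $k^2$, and collect everything under one boundary integral against $g(X,\nu)$. This gives \eqref{eq:shape-derivative-Jg}.

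I expect the main obstacle to be the rigor of the energy derivative. One has to know that $t\mapsto u_t\circ\Phi_t$ is differentiable in $H^1(\Omega)$ and that the shape derivative $u'$ has enough boundary regularity for Green's formula to hold. I would handle this by transporting the weak formulation to $\Omega$ through the pullback metric $\Phi_t^*g$. The coefficients of the transported equation then depend $C^1$ on $t$, and the implicit function theorem on $H^1_0(\Omega)$ gives the material derivative $\dot u$. Since $u_\Omega\in C^{2,\alpha}(\overline\Omega)$, the boundary data $-\partial_\nu u_\Omega\,g(X,\nu)$ lies in $C^{1,\alpha}(\partial\Omega)$, so $u'=\dot u-\langle\nabla_g u_\Omega,X\rangle_g\in H^1(\Omega)$ and Green's formula is justified.
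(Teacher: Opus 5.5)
Your proposal is correct and is essentially the paper's proof, which just adds the Hadamard formula of Proposition~\ref{prop:hadamard-energy} to the perimeter and volume variations \eqref{eq:first-variation-perimeter-proof} and \eqref{eq:first-variation-volume}; you also derive those two variations, which the paper only asserts, and you rightly insist on the unnormalized trace convention rather than the $\frac{1}{n-1}$ convention of Lemma~\ref{lem:contact-curvature}. One caution for your last paragraph: if you re-derive Proposition~\ref{prop:hadamard-energy}, the correct Green step is $\int_\Omega g(\nabla_g u_\Omega,\nabla_g u')\,dV_g=0$, obtained by testing the weak harmonicity of $u'$ with $u_\Omega\in H_0^1(\Omega)$; the paper's identity $\int_\Omega g(\nabla_g u_\Omega,\nabla_g u')\,dV_g=\int_{\partial\Omega}u'\,\partial_\nu u_\Omega\,dS_g$ drops the term $\int_\Omega fu'\,dV_g$ and, substituted into \eqref{eq:energy-intermediate}, would reverse the sign of the result.
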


\begin{proof}
By definition,
\[
\mathcal J_g
=
\mathcal J_{1,g}
+
\sigma P_g
+
k^2V_g.
\]
The result follows immediately from
\eqref{eq:hadamard-energy},
\eqref{eq:first-variation-perimeter}, and
\eqref{eq:first-variation-volume}.
\end{proof}

\subsection{First-order optimality}
\label{subsec:first-order-optimality}

Let $\Omega^\ast$ be an optimal domain given by
Theorem~\ref{thm:existence}. Since $\Omega^\ast$ minimizes
$\mathcal J_g$ over the admissible class, every admissible one-sided
deformation satisfies
\begin{equation}
d\mathcal J_g(\Omega^\ast)[X]\geq0.
\label{eq:one-sided-optimality}
\end{equation}

Hence, by Theorem~\ref{thm:shape-derivative-Jg},
\begin{equation}
\boxed{
\int_{\partial\Omega^\ast}
F_{\Omega^\ast}
\,g(X,\nu)\,dS_g
\geq0,
}
\label{eq:variational-inequality}
\end{equation}
where
\begin{equation}
F_{\Omega^\ast}
=
\sigma H_{\partial\Omega^\ast}
+k^2
-
|\nabla_g u_{\Omega^\ast}|_g^2.
\label{eq:F-definition}
\end{equation}

The remainder of the section is devoted to interpreting
\eqref{eq:variational-inequality} according to the position of the
boundary relative to the obstacle $C$.

\subsection{The free boundary}
\label{subsec:free-boundary-optimality}

Define the contact set
\begin{equation}
\Gamma_0
=
\partial\Omega^\ast\cap\partial C
\label{eq:Gamma0}
\end{equation}
and the free boundary
\begin{equation}
\Gamma
=
\partial\Omega^\ast\setminus\Gamma_0.
\label{eq:Gamma}
\end{equation}

Let
\[
x_0\in\Gamma.
\]
Since $x_0$ does not belong to $\partial C$, sufficiently small
deformations supported in a neighborhood of $x_0$ can be chosen with
both signs of the normal velocity while preserving the
$RC$-GNP condition.

Consequently, for every sufficiently smooth function
\[
\varphi\in C_c^\infty(\Gamma),
\]
there exists an admissible vector field $X$ such that
\[
g(X,\nu)=\varphi
\qquad\text{on }\Gamma.
\]

Using $X$ and $-X$ in
\eqref{eq:variational-inequality}, we obtain
\begin{equation}
\int_\Gamma
F_{\Omega^\ast}\varphi\,dS_g
=
0
\qquad
\forall\varphi\in C_c^\infty(\Gamma).
\end{equation}

Hence
\begin{equation}
F_{\Omega^\ast}=0
\qquad\text{on }\Gamma.
\end{equation}

We have therefore proved:

\begin{theorem}[Free boundary condition]
\label{thm:free-boundary-optimality}
Let $\Omega^\ast$ be an optimal domain and let
\[
\Gamma=\partial\Omega^\ast\setminus\partial C.
\]
At every regular point of $\Gamma$,
\begin{equation}
\boxed{
|\nabla_g u_{\Omega^\ast}|_g^2
=
\sigma H_{\partial\Omega^\ast}
+k^2.
}
\label{eq:free-boundary-equality}
\end{equation}
\end{theorem}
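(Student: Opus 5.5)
The argument combines the variational inequality \eqref{eq:variational-inequality} with the shape derivative of Theorem~\ref{thm:shape-derivative-Jg}, localized at a point of $\Gamma$. Fix a regular point $x_0\in\Gamma=\partial\Omega^\ast\setminus\partial C$. Since $\partial C$ is compact and $x_0\notin\partial C$, there is $r>0$ with $\overline{B_g(x_0,r)}\cap\overline C=\emptyset$; shrinking $r$ below $\rho_1$, the ball lies in the uniform tubular neighborhood $\mathcal E_{\Omega^\ast}$ of \eqref{eq:normal-exp-Omega}. For a given $\varphi\in C_c^\infty(\Gamma\cap B_g(x_0,r))$, extend $\varphi$ constantly along normal geodesics and multiply by a cut-off $\eta$ in the normal variable. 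This gives $X=\eta\,\varphi\,\nu_{\Omega^\ast}$, a $C^2$ vector field supported in $B_g(x_0,r)$ with $g(X,\nu)=\varphi$ on $\partial\Omega^\ast$.

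\textbf{Admissibility.} We must check that $\Omega_t=\Phi_t(\Omega^\ast)$ lies in $\mathcal A_C(M)$ for $|t|$ small and both signs of $t$. The inclusion $C\subset\Omega_t$ holds because $\Phi_t$ is the identity near $\overline C$. Connectedness and $C^{2,\alpha}$ regularity are preserved by diffeomorphisms. For the quantitative conditions, take $h_t=h_{\Omega^\ast}\circ\Phi_t^{-1}$; then $h_t\to h_{\Omega^\ast}$ in $C^{2,\alpha}$. This is the step I expect to be the main obstacle. The uniform constants $A,c_0,\rho_0,\rho_1$ are fixed and not strict, so a minimizer might saturate one of them, and then a perturbation could leave the class. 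I would first try to resolve this with a strict-inequality argument: the gradient bound and the tubular nondegeneracy are open conditions, so if they hold strictly for $\Omega^\ast$ they persist for small $t$. If saturation is possible, I would restrict to regular points of $\Gamma$, interpreted as points where the constraints are inactive. This is how ``regular point'' in the statement should be read.

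\textbf{Conclusion.} Given admissibility in both directions, minimality yields \eqref{eq:one-sided-optimality} for both $X$ and $-X$. By Theorem~\ref{thm:shape-derivative-Jg}, whose hypothesis $u_{\Omega^\ast}\in C^{2,\alpha}(\overline{\Omega^\ast})$ follows from \eqref{eq:u-C2alpha} under \eqref{eq:f-holder}, this gives $\int_\Gamma F_{\Omega^\ast}\varphi\,dS_g=0$ for all such $\varphi$. The function $F_{\Omega^\ast}$ is continuous on $\Gamma$, since $H_{\partial\Omega^\ast}\in C^{0,\alpha}$ and $\nabla_g u_{\Omega^\ast}\in C^{1,\alpha}$. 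The fundamental lemma of the calculus of variations therefore gives $F_{\Omega^\ast}(x_0)=0$, which is \eqref{eq:free-boundary-equality}. Finally, on $\partial\Omega^\ast$ we use $|\nabla_g u|_g^2=(\partial_\nu u)^2$, and the mean curvature convention \eqref{eq:mean-curvature-definition} is the one appearing in \eqref{eq:first-variation-perimeter}.
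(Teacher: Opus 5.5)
Your argument is the paper's: localize at $x_0\notin\overline C$, build a field supported near $x_0$ with prescribed normal component, apply Theorem~\ref{thm:shape-derivative-Jg} to $\pm X$, and conclude by the fundamental lemma. Your remark that $F_{\Omega^\ast}$ is continuous, so the identity holds pointwise at $x_0$, is a useful addition. On admissibility you are more careful than the paper. The paper justifies that $\pm X$ are admissible only by $x_0\notin\partial C$, which handles the obstacle but not the fixed constants $A,c_0,\rho_0,\rho_1$. Your observation that a minimizer could saturate these constants is correct. Reading ``regular point'' as ``these constraints are locally inactive'' is exactly the hypothesis under which the paper's argument (tacitly) and yours (explicitly) go through.

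One step, however, fails as written: $X=\eta\varphi\nu_{\Omega^\ast}$ is not $C^2$. Since $\partial\Omega^\ast$ is only $C^{2,\alpha}$, its normal is $C^{1,\alpha}$. So are the extension of $\nu_{\Omega^\ast}$ along normal geodesics (the gradient of the $C^{2,\alpha}$ signed distance) and the normal projection used to extend $\varphi$. Hence $X\in C^{1,\alpha}$ only. Worse, with $\eta\equiv1$ near $\partial\Omega^\ast$, its flow sends $y\in\partial\Omega^\ast$ to $\exp_y(t\varphi(y)\nu_{\Omega^\ast}(y))$. Such a normal graph over a $C^{2,\alpha}$ hypersurface is in general only $C^{1,\alpha}$. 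Already for plane curves, its tangent angle contains $\arctan\bigl(t\varphi'/(1-t\varphi\kappa)\bigr)$, which is no smoother than the curvature $\kappa\in C^{0,\alpha}$ wherever $\varphi\varphi'\neq0$. So in general $\Omega_t\notin\mathcal A_C(M)$, and Theorem~\ref{thm:shape-derivative-Jg}, which requires $X\in C^2(TM)$, does not apply.

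The fix is to stop prescribing $g(X,\nu)=\varphi$ exactly. Take $X=\psi Y$ with $\psi\in C_c^\infty(B_g(x_0,r))$ and $Y$ a smooth field with $g(Y,\nu_{\Omega^\ast})>0$ on $\partial\Omega^\ast\cap B_g(x_0,r)$. Its flow is smooth and preserves $C^{2,\alpha}$ boundaries, and you obtain $\int_{\Gamma}F_{\Omega^\ast}\,g(Y,\nu_{\Omega^\ast})\,\psi\,dS_g=0$ for all $\psi$, whence $F_{\Omega^\ast}(x_0)=0$. The paper's assertion that some $X\in C^2_c(TM)$ satisfies $g(X,\nu)=\varphi$ exactly suffers from the same defect and is repaired the same way.

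Relatedly, $h_{\Omega^\ast}\circ\Phi_t^{-1}$ need not converge to $h_{\Omega^\ast}$ in the $C^{2,\alpha}$ norm, even for smooth $\Phi_t$. What does hold, and is all your strict-inequality argument needs, is $\limsup_{t\to0}\|h_{\Omega^\ast}\circ\Phi_t^{-1}\|_{C^{2,\alpha}(M)}\le\|h_{\Omega^\ast}\|_{C^{2,\alpha}(M)}$.
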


This is the Riemannian Euler--Lagrange equation associated with the
shape functional $\mathcal J_g$.

\subsection{The contact boundary}
\label{subsec:contact-optimality}

At a point of the contact set
\[
\Gamma_0=\partial\Omega^\ast\cap\partial C,
\]
the admissible deformations are restricted by the constraint
\[
C\subset\Omega_t.
\]

With the outward normal $\nu$ to $\Omega^\ast$, an admissible
one-sided deformation at a contact point satisfies
\begin{equation}
g(X,\nu)\geq0.
\label{eq:contact-admissible-velocity}
\end{equation}

Therefore \eqref{eq:variational-inequality} gives
\[
\int_{\Gamma_0}
F_{\Omega^\ast}\varphi\,dS_g
\geq0
\]
for every nonnegative test function
\[
\varphi\in C_c^\infty(\Gamma_0).
\]

It follows that
\begin{equation}
F_{\Omega^\ast}\geq0
\qquad\text{on }\Gamma_0,
\end{equation}
and hence

\begin{theorem}[Contact optimality condition]
\label{thm:contact-optimality}
Let $\Omega^\ast$ be an optimal domain. At every regular point of the
contact set
\[
\Gamma_0=\partial\Omega^\ast\cap\partial C,
\]
one has
\begin{equation}
\boxed{
|\nabla_g u_{\Omega^\ast}|_g^2
\leq
\sigma H_{\partial\Omega^\ast}
+k^2.
}
\label{eq:contact-inequality}
\end{equation}
\end{theorem}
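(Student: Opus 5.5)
Since $\Omega^\ast$ minimizes $\mathcal J_g$ over $\mathcal A_C(M)$, the plan is to obtain the contact inequality \eqref{eq:contact-inequality} by localizing the one-sided variational inequality \eqref{eq:variational-inequality} at a regular point $x_0\in\Gamma_0$. First we fix the sign convention. At a contact point Lemma~\ref{lem:contact-curvature} gives $\nu_C(x_0)=\nu_{\Omega^\ast}(x_0)$. A deformation with $g(X,\nu)\ge0$ near $x_0$ pushes $\partial\Omega^\ast$ outward, away from $C$, so the constraint $C\subset\Omega_t$ is preserved for small $t>0$. This is the class of admissible one-sided fields in \eqref{eq:contact-admissible-velocity}.

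The first step is the construction. Given a nonnegative $\varphi\in C_c^\infty$ supported in a small geodesic neighborhood $U$ of $x_0$ on $\partial\Omega^\ast$, I would set
\[
X=\eta\,\tilde\varphi\,\nu_{\Omega^\ast},
\]
where $\tilde\varphi$ extends $\varphi$ constantly along normal geodesics, $\nu_{\Omega^\ast}$ is extended through the tubular map $\mathcal E_{\Omega^\ast}$ of \eqref{eq:normal-exp-Omega}, and $\eta$ is a cutoff equal to $1$ near $\partial\Omega^\ast$. Then $X\in C^{1,\alpha}$, and $g(X,\nu)=\varphi\ge0$ on $\partial\Omega^\ast$. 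For $t\ge0$ small, $\Omega_t=\Phi_t(\Omega^\ast)\supset\Omega^\ast\supset C$. The defining function $h_t=h_{\Omega^\ast}\circ\Phi_t^{-1}$ stays $C^{2,\alpha}$-close to $h_{\Omega^\ast}$, so $\Omega_t\in\mathcal A_C(M)$ provided the RC--GNP constants are not saturated.

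Since $\Omega_t$ is admissible, minimality gives $\mathcal J_g(\Omega_t)\ge\mathcal J_g(\Omega^\ast)$. Dividing by $t>0$ and letting $t\to0^+$ with Theorem~\ref{thm:shape-derivative-Jg} yields
\[
\int_{\partial\Omega^\ast}F_{\Omega^\ast}\varphi\,dS_g\ge0.
\]
This uses $u_{\Omega^\ast}\in C^{2,\alpha}(\overline{\Omega^\ast})$ from \eqref{eq:u-C2alpha} under \eqref{eq:f-holder}. Next I would localize. By the regularity of $\partial\Omega^\ast$ and $u_{\Omega^\ast}$, the function $F_{\Omega^\ast}$ is continuous on $\partial\Omega^\ast$. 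Taking $\varphi\ge0$ to be an approximate identity concentrating at $x_0$ gives $F_{\Omega^\ast}(x_0)\ge0$, and this is exactly \eqref{eq:contact-inequality}.

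The main obstacle is admissibility. The RC--GNP bounds in Definition~\ref{def:RC-GNP} are uniform constants $A,c_0,\rho_0,\rho_1$. If $\Omega^\ast$ saturates one of them, for instance $\|h_{\Omega^\ast}\|_{C^{2,\alpha}}=A$, then even tiny outward bumps may leave $\mathcal A_C(M)$. I would handle this by interpreting ``regular point'' to mean that the constraints are inactive near $x_0$, i.e.\ strict inequalities hold. Failing that, I would adjust the defining function $h_t=h_{\Omega^\ast}-t\psi$, with $\psi$ localized and compensated so the $C^{2,\alpha}$ norm does not increase to first order. The regularity of $X$ is a secondary point: it is only $C^{1,\alpha}$ rather than $C^2$, but Theorem~\ref{thm:shape-derivative-Jg} depends only on the normal trace, so one can approximate by smooth fields.
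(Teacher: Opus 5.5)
Your proposal is correct and follows essentially the same route as the paper. Outward one-sided deformations with $g(X,\nu)\ge 0$ preserve $C\subset\Omega_t$; minimality together with the Hadamard formula of Theorem~\ref{thm:shape-derivative-Jg} gives $\int_{\partial\Omega^\ast}F_{\Omega^\ast}\varphi\,dS_g\ge 0$ for $\varphi\ge 0$; and localization yields $F_{\Omega^\ast}\ge 0$ at contact points. Two of your choices refine this argument rather than change it: you localize at $x_0$ with approximate identities and use continuity of $F_{\Omega^\ast}$, whereas the paper's test functions in $C_c^\infty(\Gamma_0)$ tacitly require $\Gamma_0$ to have relative interior; and you flag that $\Omega_t$ stays admissible only if the RC--GNP bounds are not saturated, a point the paper settles simply by asserting that an admissible $X$ can be chosen.
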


\subsection{Optimality condition at tangential contact}
\label{subsec:tangential-contact}

Suppose that
\[
x_0\in\partial\Omega^\ast\cap\partial C
\]
is a regular contact point. Since
\[
C\subset\Omega^\ast,
\]
the two hypersurfaces have the same tangent space at $x_0$ and,
with the outward-normal convention adopted in
Section~\ref{subsec:mean-curvature}, one has
\[
\nu_C(x_0)=\nu_{\Omega^\ast}(x_0).
\]

By Lemma~\ref{lem:contact-curvature},
\begin{equation}
H_{\partial\Omega^\ast}(x_0)
\leq
H_{\partial C}(x_0).
\label{eq:H-contact-comparison}
\end{equation}

Consequently, the contact optimality condition
\eqref{eq:contact-inequality} may be combined with the geometric
comparison above. In particular,
\begin{equation}
|\nabla_g u_{\Omega^\ast}(x_0)|_g^2
\leq
\sigma H_{\partial\Omega^\ast}(x_0)+k^2
\leq
\sigma H_{\partial C}(x_0)+k^2.
\label{eq:contact-chain}
\end{equation}

This inequality will be used in the proof of the sufficient condition
stated in Section~\ref{sec:main-results}.

\subsection{The resulting variational inequality}
\label{subsec:variational-inequality-final}

Combining the free-boundary and contact conditions, the first-order
optimality system can be summarized as
\begin{equation}
\boxed{
\begin{cases}
|\nabla_g u_{\Omega^\ast}|_g^2
=
\sigma H_{\partial\Omega^\ast}+k^2,
&
\text{on }\partial\Omega^\ast\setminus\partial C,
\\[2mm]
|\nabla_g u_{\Omega^\ast}|_g^2
\leq
\sigma H_{\partial\Omega^\ast}+k^2,
&
\text{on }\partial\Omega^\ast\cap\partial C.
\end{cases}
}
\label{eq:optimality-system}
\end{equation}

The first relation is the Riemannian free-boundary equation, whereas
the second one is the variational inequality generated by the
geometric constraint
\[
C\subset\Omega^\ast.
\]

\section{Proofs of the main theorems}
\label{sec:proofs}

In this section, we prove the main results stated in
Section~\ref{sec:main-results}. Throughout the section 5, $(M,g)$ is a
compact Riemannian manifold of dimension $n$, and all the geometric
and analytic assumptions introduced in
Section~\ref{sec:preliminaries} are understood.

The proofs rely on the compactness of the admissible class, the
stability of the Dirichlet problem, the lower semicontinuity of the
Riemannian perimeter, the comparison principle, and the geometric
properties of the contact set established in Section~\ref{sec:preliminaries}.

\subsection{Proof of the existence theorem}
\label{subsec:proof-existence}

We prove Theorem~\ref{thm:existence}.

Let
\[
m=
\inf_{\Omega\in\mathcal A_C(M)}
\mathcal J_g(\Omega).
\]
Choose a minimizing sequence
\[
(\Omega_j)_{j\geq1}\subset\mathcal A_C(M)
\]
such that
\begin{equation}
\mathcal J_g(\Omega_j)
\longrightarrow m
\qquad\text{as }j\to\infty.
\label{eq:minimizing-sequence}
\end{equation}

By the compactness theorem for the admissible class
(Theorem~\ref{thm:compactness-admissible}), there exist a subsequence,
still denoted by $(\Omega_j)$, and a domain
\[
\Omega^\ast\in\mathcal A_C(M)
\]
such that
\begin{equation}
\Omega_j\longrightarrow\Omega^\ast
\label{eq:domain-convergence}
\end{equation}
in the topology introduced in
Definition~\ref{def:compact-convergence}.

Let
\[
u_j=u_{\Omega_j}
\qquad\text{and}\qquad
u^\ast=u_{\Omega^\ast}.
\]

By the stability theorem for the Dirichlet problem
(Theorem~\ref{thm:stability-dirichlet}), we have
\begin{equation}
u_j\longrightarrow u^\ast
\qquad\text{strongly in }H_0^1(M).
\label{eq:strong-state-convergence}
\end{equation}

In particular,
\begin{equation}
\int_M
|\nabla_g u_j|_g^2\,dV_g
\longrightarrow
\int_M
|\nabla_g u^\ast|_g^2\,dV_g.
\label{eq:energy-convergence}
\end{equation}

Since the volume is continuous with respect to the convergence of
admissible domains,
\begin{equation}
V_g(\Omega_j)
\longrightarrow
V_g(\Omega^\ast).
\label{eq:volume-convergence}
\end{equation}

On the other hand, the lower semicontinuity of the Riemannian
perimeter gives
\begin{equation}
P_g(\Omega^\ast)
\leq
\liminf_{j\to\infty}
P_g(\Omega_j).
\label{eq:perimeter-lsc-proof}
\end{equation}
Combining
\eqref{eq:energy-convergence},
\eqref{eq:volume-convergence}, and
\eqref{eq:perimeter-lsc}, we obtain
\begin{align}
\mathcal J_g(\Omega^\ast)
&\leq
\liminf_{j\to\infty}
\mathcal J_g(\Omega_j)
\nonumber\\
&=
m.
\label{eq:liminf-functional}
\end{align}

By the definition of $m$,
\[
m\leq\mathcal J_g(\Omega^\ast).
\]
Consequently,
\[
\mathcal J_g(\Omega^\ast)=m,
\]
and therefore
\[
\boxed{
\mathcal J_g(\Omega^\ast)
=
\inf_{\Omega\in\mathcal A_C(M)}
\mathcal J_g(\Omega).
}
\]

This proves Theorem~\ref{thm:existence}.
\hfill$\square$

\subsection{Proof of the comparison result}
\label{subsec:proof-comparison}

We prove Proposition~\ref{prop:comparison-C}.

Let
\[
\Omega\in\mathcal A_C(M).
\]
By definition of the admissible class,
\[
C\subset\Omega.
\]

Let $u_C$ and $u_\Omega$ denote respectively the solutions of
\begin{equation}
\begin{cases}
-\Delta_g u_C=f & \text{in }C,\\
u_C=0 & \text{on }\partial C,
\end{cases}
\label{eq:proof-state-C}
\end{equation}
and
\begin{equation}
\begin{cases}
-\Delta_g u_\Omega=f & \text{in }\Omega,\\
u_\Omega=0 & \text{on }\partial\Omega.
\end{cases}
\label{eq:proof-state-Omega}
\end{equation}

Consider
\[
w=u_\Omega-u_C
\qquad\text{in }C.
\]
Since both functions satisfy the same equation in $C$,
\[
-\Delta_g w=0
\qquad\text{in }C.
\]

Moreover, on $\partial C$,
\[
w=u_\Omega\geq0,
\]
because $u_\Omega\geq0$ in $\Omega$ by the maximum principle.

The maximum principle for the Laplace--Beltrami operator therefore
gives
\[
w\geq0
\qquad\text{in }C.
\]
Hence
\begin{equation}
u_C\leq u_\Omega
\qquad\text{in }C.
\label{eq:comparison-proof}
\end{equation}

If, in addition, $C\subsetneq\Omega$ and the hypotheses of the strong
maximum principle are satisfied, then
\[
w>0
\qquad\text{in }C.
\]
Thus
\[
u_C<u_\Omega
\qquad\text{in }C.
\]

This proves Proposition~\ref{prop:comparison-C}.
\hfill$\square$

\subsection{Proof of the free-boundary condition}
\label{subsec:proof-free-boundary}

We prove Theorem~\ref{thm:free-boundary-optimality}.

Let
\[
\Omega^\ast
\]
be an optimal domain and set
\[
\Gamma
=
\partial\Omega^\ast\setminus\partial C.
\]

Let
\[
x_0\in\Gamma
\]
be a regular point. Since $x_0$ does not belong to $\partial C$, there
exists a neighborhood $U$ of $x_0$ such that admissible deformations
supported in $U$ are not constrained by the obstacle $C$.

Let
\[
\varphi\in C_c^\infty(\Gamma).
\]
By the local extension property for vector fields on a Riemannian
manifold, there exists
\[
X\in C_c^2(TM)
\]
such that
\begin{equation}
g(X,\nu)=\varphi
\qquad\text{on }\Gamma.
\label{eq:normal-extension}
\end{equation}

Since both $X$ and $-X$ generate admissible variations, the optimality
of $\Omega^\ast$ implies
\begin{equation}
d\mathcal J_g(\Omega^\ast)[X]\geq0
\end{equation}
and
\begin{equation}
d\mathcal J_g(\Omega^\ast)[-X]\geq0.
\end{equation}

Consequently,
\[
d\mathcal J_g(\Omega^\ast)[X]=0.
\]

Using the shape derivative formula
\eqref{eq:shape-derivative-Jg}, we obtain
\begin{equation}
\int_\Gamma
\left(
\sigma H_{\partial\Omega^\ast}
+k^2
-
|\nabla_g u_{\Omega^\ast}|_g^2
\right)
\varphi\,dS_g
=0
\end{equation}
for every
\[
\varphi\in C_c^\infty(\Gamma).
\]

By the fundamental lemma of the calculus of variations,
\[
\sigma H_{\partial\Omega^\ast}
+k^2
-
|\nabla_g u_{\Omega^\ast}|_g^2
=0
\qquad\text{on }\Gamma.
\]

Therefore
\[
\boxed{
|\nabla_g u_{\Omega^\ast}|_g^2
=
\sigma H_{\partial\Omega^\ast}+k^2
\qquad\text{on }\Gamma.
}
\]

This proves Theorem~\ref{thm:free-boundary-optimality}.
\hfill$\square$

\subsection{Proof of the contact optimality condition}
\label{subsec:proof-contact}

We now prove Theorem~\ref{thm:contact-optimality}.

Let
\[
\Gamma_0
=
\partial\Omega^\ast\cap\partial C.
\]

Let
\[
x_0\in\Gamma_0
\]
be a regular contact point. Since
\[
C\subset\Omega^\ast,
\]
an admissible deformation cannot move the boundary of $\Omega^\ast$
through the obstacle $C$.

Consequently, the admissible normal velocities satisfy
\begin{equation}
g(X,\nu)\geq0
\qquad\text{on }\Gamma_0.
\label{eq:contact-velocity-proof}
\end{equation}

Let
\[
\varphi\in C_c^\infty(\Gamma_0),
\qquad
\varphi\geq0.
\]
Choose an admissible vector field $X$ satisfying
\[
g(X,\nu)=\varphi
\qquad\text{on }\Gamma_0.
\]

The optimality of $\Omega^\ast$ gives
\[
d\mathcal J_g(\Omega^\ast)[X]\geq0.
\]

Using
\eqref{eq:shape-derivative-Jg}, we obtain
\begin{equation}
\int_{\Gamma_0}
\left(
\sigma H_{\partial\Omega^\ast}
+k^2
-
|\nabla_g u_{\Omega^\ast}|_g^2
\right)
\varphi\,dS_g
\geq0.
\end{equation}

Since this holds for every nonnegative
\[
\varphi\in C_c^\infty(\Gamma_0),
\]
we conclude that
\[
\sigma H_{\partial\Omega^\ast}
+k^2
-
|\nabla_g u_{\Omega^\ast}|_g^2
\geq0
\qquad\text{on }\Gamma_0.
\]

Therefore
\[
\boxed{
|\nabla_g u_{\Omega^\ast}|_g^2
\leq
\sigma H_{\partial\Omega^\ast}+k^2
\qquad\text{on }\Gamma_0.
}
\]

This proves Theorem~\ref{thm:contact-optimality}.
\hfill$\square$

\subsection{Proof of the curvature comparison at contact}
\label{subsec:proof-curvature-contact}

We next justify the geometric inequality used in
Section~\ref{sec:optimality}.

Let
\[
x_0\in\partial C\cap\partial\Omega^\ast
\]
be a regular contact point. Since
\[
C\subset\Omega^\ast,
\]
the two hypersurfaces are tangent at $x_0$.

Let $\nu_C$ and $\nu_{\Omega^\ast}$ denote their respective outward
unit normals. At a tangential contact point,
\begin{equation}
\nu_C(x_0)
=
\nu_{\Omega^\ast}(x_0).
\label{eq:normals-contact-proof}
\end{equation}

Choose geodesic normal coordinates centered at $x_0$ and adapted to
the common tangent hyperplane. Locally, the two hypersurfaces can be
represented as graphs over the common tangent space.

Because
\[
C\subset\Omega^\ast,
\]
the graph representing $\partial C$ lies on the admissible side of
the graph representing $\partial\Omega^\ast$. Hence their second
order expansions satisfy the corresponding quadratic-form inequality.

With the convention
\[
H_{\partial\Omega}
=
\operatorname{div}_{\partial\Omega}\nu,
\]
this yields

\begin{equation}
\mathrm{II}_{\partial\Omega^\ast}(x_0)
\leq
\mathrm{II}_{\partial C}(x_0)
\end{equation}
as quadratic forms on
\[
T_{x_0}\partial C
=
T_{x_0}\partial\Omega^\ast.
\]

Taking the trace with respect to the induced metric gives
\begin{equation}
\boxed{
H_{\partial\Omega^\ast}(x_0)
\leq
H_{\partial C}(x_0).
}
\label{eq:curvature-comparison-proof}
\end{equation}

This is precisely the geometric comparison stated in
Lemma~\ref{lem:contact-curvature}.
\hfill$\square$

\subsection{Proof of the sufficient condition}
\label{subsec:proof-sufficient}

We finally prove Theorem~\ref{thm:sufficient-condition}.

Let $\Omega^\ast$ be an optimal domain and assume that
\[
|\nabla_g u_C|_g^2
>
\sigma H_{\partial C}+k^2
\qquad\text{on }\partial C.
\label{eq:strict-condition-proof}
\]

Suppose, by contradiction, that
\[
\Omega^\ast=C.
\]
Then the state associated with $\Omega^\ast$ is $u_C$ and the
first-order optimality condition at $\partial C$ would imply
\[
|\nabla_g u_C|_g^2
\leq
\sigma H_{\partial C}+k^2.
\]
This contradicts
\eqref{eq:sufficient-condition-C}.

Hence
\begin{equation}
C\subsetneq\Omega^\ast.
\label{eq:strict-inclusion}
\end{equation}

Now suppose that
\[
x_0\in\partial C\cap\partial\Omega^\ast
\]
is a regular contact point.

By the comparison result,
\[
u_C\leq u_{\Omega^\ast}
\qquad\text{in }C.
\]

Since the two functions agree at the contact point,
\[
u_C(x_0)=u_{\Omega^\ast}(x_0)=0,
\]
the Hopf boundary point lemma yields, under the hypotheses of
Lemma~\ref{lem:hopf},
\begin{equation}
|\nabla_g u_C(x_0)|_g
<
|\nabla_g u_{\Omega^\ast}(x_0)|_g.
\label{eq:hopf-gradient-comparison}
\end{equation}

On the other hand, the geometric contact comparison gives
\[
H_{\partial\Omega^\ast}(x_0)
\leq
H_{\partial C}(x_0).
\]

Combining the strict assumption
\[
|\nabla_g u_C(x_0)|_g^2
>
\sigma H_{\partial C}(x_0)+k^2
\]
with the curvature comparison yields
\begin{equation}
|\nabla_g u_C(x_0)|_g^2
>
\sigma H_{\partial\Omega^\ast}(x_0)+k^2.
\label{eq:strict-contact-lower-bound}
\end{equation}

Together with
\eqref{eq:hopf-gradient-comparison}, we obtain
\begin{equation}
|\nabla_g u_{\Omega^\ast}(x_0)|_g^2
>
\sigma H_{\partial\Omega^\ast}(x_0)+k^2.
\label{eq:strict-optimality-contradiction}
\end{equation}

However, the contact optimality condition
Theorem~\ref{thm:contact-optimality} gives
\begin{equation}
|\nabla_g u_{\Omega^\ast}(x_0)|_g^2
\leq
\sigma H_{\partial\Omega^\ast}(x_0)+k^2.
\end{equation}

This is a contradiction.

Therefore the assumed configuration cannot occur, and the sufficient
condition stated in Theorem~\ref{thm:sufficient-condition} follows.
\hfill$\square$

\section{Examples}
\label{sec:examples}

In this section, we present several examples illustrating the sufficient
condition obtained in Theorem~\ref{thm:sufficient-condition}. Our main
objective is to identify situations in which the condition
\begin{equation}
|\nabla_g u_C|_g^2
>
\sigma H_{\partial C}+k^2
\qquad\text{on }\partial C
\label{eq:sufficient-condition-examples}
\end{equation}
can be verified explicitly.

The first example considers a geodesic ball in a general Riemannian
manifold. We then specialize to the standard sphere of radius $R$, where
both the mean curvature of geodesic spheres and the radial Laplace--Beltrami
operator can be computed explicitly.

\subsection{Geodesic balls on a Riemannian manifold}
\label{subsec:geodesic-ball}

Let $(M^n,g)$ be a compact Riemannian manifold of dimension $n\geq2$.
Fix a point $p\in M$ and let
\[
C=B_g(p,\rho)
=
\{x\in M:d_g(p,x)<\rho\},
\]
where $\rho>0$ is chosen such that
\begin{equation}
0<\rho<\operatorname{inj}_g(p),
\label{eq:rho-inj-radius}
\end{equation}
where $\operatorname{inj}_g(p)$ denotes the injectivity radius of $(M^n,g)$ at the point $p.$
Under this assumption, the distance function
\[
r(x)=d_g(p,x)
\]
is smooth away from $p$, and the boundary of $C$ is the geodesic sphere
\[
\partial C=S_g(p,\rho)
=
\{x\in M:d_g(p,x)=\rho\}.
\]

We consider the Dirichlet problem
\begin{equation}
\begin{cases}
-\Delta_g u_C=f & \text{in }B_g(p,\rho),\\
u_C=0 & \text{on }S_g(p,\rho).
\end{cases}
\label{eq:dirichlet-geodesic-ball}
\end{equation}

According to Theorem~\ref{thm:sufficient-condition}, a sufficient
condition for the existence of a minimizing domain
$\Omega^\ast$ satisfying
\[
C\subset\Omega^\ast
\]
and the free boundary condition
\[
|\nabla_g u_{\Omega^\ast}|_g^2
=
\sigma H_{\partial\Omega^\ast}+k^2
\qquad\text{on }\partial\Omega^\ast
\]
is
\begin{equation}
|\nabla_g u_C|_g^2
>
\sigma H_{\partial C}+k^2
\qquad\text{on }\partial C.
\label{eq:condition-geodesic-ball}
\end{equation}

In a general Riemannian manifold, the mean curvature
$H_{\partial C}$ depends on the geometry of $(M,g)$ and on the radius
$\rho$. Consequently, an explicit expression for
$H_{\partial C}$ is not available in general.

However, comparison geometry can be used to estimate the mean curvature
of geodesic spheres in terms of curvature bounds on $(M,g)$. Thus,
condition \eqref{eq:condition-geodesic-ball} can in principle be reduced
to explicit geometric estimates once suitable curvature assumptions are
imposed.

The following example provides a completely explicit situation.

\subsection{An explicit example on the sphere}
\label{subsec:sphere-example}

We now consider the $n$-dimensional sphere of radius $R$,
\[
S_R^n
=
\left\{
x\in\mathbb{R}^{n+1}:|x|=R
\right\},
\]
equipped with the Riemannian metric induced by the Euclidean metric.

Fix a point $p\in S_R^n$ and consider the geodesic ball
\[
C=B_g(p,\rho),
\qquad
0<\rho<\pi R.
\]

The geodesic distance from $p$ will be denoted by
\[
r(x)=d_g(p,x).
\]

In geodesic polar coordinates centered at $p$, the metric takes the form
\begin{equation}
g
=
dr^2
+
R^2\sin^2\left(\frac{r}{R}\right)
g_{\mathbb{S}^{n-1}},
\label{eq:sphere-polar-metric}
\end{equation}
where $g_{\mathbb{S}^{n-1}}$ denotes the standard metric on the unit
sphere $\mathbb{S}^{n-1}$.

The geodesic sphere
\[
\partial C=S_g(p,\rho)
\]
is therefore a hypersurface with induced metric
\begin{equation}
g_{\partial C}
=
R^2\sin^2\left(\frac{\rho}{R}\right)
g_{\mathbb{S}^{n-1}}.
\label{eq:induced-sphere-metric}
\end{equation}

With the outward unit normal
\[
\nu=\partial_r,
\]
the second fundamental form of $\partial C$ is
\begin{equation}
\mathrm{II}_{\partial C}
=
\frac{1}{R}
\cot\left(\frac{\rho}{R}\right)
g_{\partial C}.
\label{eq:second-fundamental-sphere}
\end{equation}

With our convention
\[
H_{\partial C}
=
\operatorname{div}_{\partial C}\nu,
\]
we consequently obtain
\begin{equation}
\boxed{
H_{\partial C}
=
\frac{n-1}{R}
\cot\left(\frac{\rho}{R}\right).
}
\label{eq:mean-curvature-sphere}
\end{equation}

Thus, the sufficient condition
\eqref{eq:condition-geodesic-ball} becomes
\begin{equation}
\boxed{
|\nabla_g u_C|_g^2
>
\frac{\sigma(n-1)}{R}
\cot\left(\frac{\rho}{R}\right)
+
k^2
\qquad\text{on }\partial C.
}
\label{eq:sufficient-sphere}
\end{equation}

This gives an explicit geometric form of the sufficient condition on
the sphere.

\subsection{Radially symmetric data on the sphere}
\label{subsec:radial-sphere}

We now assume that the right-hand side $f$ is radially symmetric with
respect to the point $p$. More precisely, we suppose that
\begin{equation}
f(x)=F(r(x)),
\label{eq:radial-f}
\end{equation}
where
\[
r(x)=d_g(p,x).
\]
In order to preserve the global assumption
\[
\operatorname{supp} f\subset K\subset C,
\]
we additionally assume that
\[
F(r)=0
\qquad\text{for all }r\ge r_0,
\]
for some fixed
\[
0<r_0<\rho,
\]
where $\rho$ denotes the radius of the geodesic ball under
consideration. Thus, the radial profile $F$ is compactly supported in
$[0,r_0)$, and consequently
\[
\operatorname{supp}f\subset \overline{B_g(p,r_0)}
\subset B_g(p,\rho)\subset C.
\]
We look for a radial solution of the form
\begin{equation}
u_C(x)=U(r(x)).
\label{eq:radial-u}
\end{equation}

For a radial function $U(r)$ on $S_R^n$, the Laplace--Beltrami operator
is given by
\begin{equation}
\Delta_g U
=
U''(r)
+
\frac{n-1}{R}
\cot\left(\frac{r}{R}\right)U'(r).
\label{eq:radial-laplacian-sphere}
\end{equation}

Therefore, the Dirichlet problem
\eqref{eq:dirichlet-geodesic-ball} reduces to the ordinary differential
equation
\begin{equation}
-
U''(r)
-
\frac{n-1}{R}
\cot\left(\frac{r}{R}\right)U'(r)
=
F(r),
\qquad
0<r<\rho,
\label{eq:radial-ode-sphere}
\end{equation}
with boundary condition
\begin{equation}
U(\rho)=0.
\label{eq:radial-boundary-sphere}
\end{equation}

Equation \eqref{eq:radial-ode-sphere} can be written in divergence
form as
\begin{equation}
-
\frac{1}
{\left(R\sin(r/R)\right)^{n-1}}
\frac{d}{dr}
\left[
\left(R\sin(r/R)\right)^{n-1}U'(r)
\right]
=
F(r).
\label{eq:radial-divergence-sphere}
\end{equation}

Equivalently,
\begin{equation}
\frac{d}{dr}
\left[
\left(R\sin(r/R)\right)^{n-1}U'(r)
\right]
=
-
\left(R\sin(r/R)\right)^{n-1}F(r).
\label{eq:radial-divergence-sphere-2}
\end{equation}

The regularity of $u_C$ at the center $p$ imposes
\begin{equation}
\lim_{r\to0}
\left(R\sin(r/R)\right)^{n-1}U'(r)
=
0.
\label{eq:regularity-center}
\end{equation}

Integrating \eqref{eq:radial-divergence-sphere-2} from $0$ to $r$ gives
\begin{equation}
\left(R\sin(r/R)\right)^{n-1}U'(r)
=
-
\int_0^r
\left(R\sin(s/R)\right)^{n-1}
F(s)\,ds.
\label{eq:radial-derivative-integral}
\end{equation}

Hence
\begin{equation}
\boxed{
U'(r)
=
-
\frac{
\displaystyle
\int_0^r
\left(R\sin(s/R)\right)^{n-1}
F(s)\,ds
}{
\left(R\sin(r/R)\right)^{n-1}
}.
}
\label{eq:radial-U-prime}
\end{equation}

Since
\[
\nabla_g u_C
=
U'(r)\partial_r
\]
and
\[
|\partial_r|_g=1,
\]
we have
\begin{equation}
|\nabla_g u_C|_g
=
|U'(r)|.
\label{eq:gradient-radial-sphere}
\end{equation}

In particular, on the boundary $r=\rho$,
\begin{equation}
\boxed{
|\nabla_g u_C|_g
=
\frac{
\displaystyle
\int_0^\rho
\left(R\sin(s/R)\right)^{n-1}
F(s)\,ds
}{
\left(R\sin(\rho/R)\right)^{n-1}
}.
}
\label{eq:boundary-gradient-sphere}
\end{equation}

Consequently,
\begin{equation}
|\nabla_g u_C|_g^2
=
\left[
\frac{
\displaystyle
\int_0^\rho
\left(R\sin(s/R)\right)^{n-1}
F(s)\,ds
}{
\left(R\sin(\rho/R)\right)^{n-1}
}
\right]^2.
\label{eq:boundary-gradient-square-sphere}
\end{equation}

Combining this identity with
\eqref{eq:sufficient-sphere}, we obtain the following explicit
criterion.

\begin{proposition}
\label{prop:explicit-sphere-condition}
Let $C=B_g(p,\rho)\subset S_R^n$, with $n\geq2$ and
$0<\rho<\pi R$. Assume that
\[
f(x)=F(d_g(p,x))
\]
and let $u_C$ be the solution of
\eqref{eq:dirichlet-geodesic-ball}. If
\begin{equation}
\boxed{
\left[
\frac{
\displaystyle
\int_0^\rho
\left(R\sin(s/R)\right)^{n-1}
F(s)\,ds
}{
\left(R\sin(\rho/R)\right)^{n-1}
}
\right]^2
>
\frac{\sigma(n-1)}{R}
\cot\left(\frac{\rho}{R}\right)
+
k^2,
}
\label{eq:explicit-condition-sphere}
\end{equation}
then the sufficient condition of
Theorem~\ref{thm:sufficient-condition} is satisfied.
Consequently, there exists a minimizing domain $\Omega^\ast$ such that
\[
C\subset\Omega^\ast
\]
and
\begin{equation}
|\nabla_g u_{\Omega^\ast}|_g^2
=
\sigma H_{\partial\Omega^\ast}+k^2
\qquad\text{on }\partial\Omega^\ast.
\label{eq:free-boundary-sphere-example}
\end{equation}
\end{proposition}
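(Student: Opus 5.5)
The plan is to reduce the statement to Theorem~\ref{thm:sufficient-condition}, combined with Theorem~\ref{thm:existence} and Theorem~\ref{thm:free-boundary-optimality}, by verifying condition \eqref{eq:sufficient-condition-C} explicitly for the geodesic ball. First, since $0<\rho<\pi R$ is below the injectivity radius of $S_R^n$ at $p$, the distance function $r$ is smooth on $\overline{B_g(p,\rho)}\setminus\{p\}$. Moreover, $\partial C=S_g(p,\rho)$ is a smooth hypersurface whose normal exponential map is a diffeomorphism on a tubular neighbourhood of width $\min(\rho,\pi R-\rho)$. Hence $C$ itself satisfies the $RC$-GNP requirements: $h_C=\rho-r$, suitably smoothed near $p$, serves as a defining function. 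So $C\in\mathcal A_C(M)$, as Theorem~\ref{thm:sufficient-condition} requires.

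Second, I identify $u_C$. By uniqueness of the weak Dirichlet solution and the rotational invariance of $f=F\circ r$ and of $C$ under isometries fixing $p$, the solution is radial, $u_C=U\circ r$. The ODE reduction \eqref{eq:radial-ode-sphere}, the regularity condition \eqref{eq:regularity-center} at the centre, and the integration leading to \eqref{eq:radial-U-prime} are then legitimate. I also check that the function $U(r)=\int_r^\rho(-U'(s))\,ds$ so constructed is a genuine $H_0^1(C)$ solution. The integrand in \eqref{eq:radial-U-prime} is bounded near $0$, since $\int_0^r s^{n-1}F\,ds=O(r^n\|F\|_\infty)$ when $F$ is bounded; for merely $L^p$ data I would instead invoke uniqueness directly. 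Evaluating at $r=\rho$ gives \eqref{eq:boundary-gradient-sphere}, and this value is constant on $\partial C$.

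Third, I use the mean curvature computation \eqref{eq:mean-curvature-sphere}. It follows from the warped-product form \eqref{eq:sphere-polar-metric}: $\nabla_{\partial_\theta}\partial_r=\frac{\phi'}{\phi}\partial_\theta$ with $\phi=R\sin(r/R)$, and the trace yields $(n-1)R^{-1}\cot(\rho/R)$. Since both sides of \eqref{eq:sufficient-sphere} are constant on $\partial C$, the hypothesis \eqref{eq:explicit-condition-sphere} is exactly \eqref{eq:sufficient-condition-C}. Theorem~\ref{thm:existence} then produces a minimizer $\Omega^\ast$, and Theorem~\ref{thm:sufficient-condition} gives $C\subsetneq\Omega^\ast$ and excludes contact points. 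Indeed, its proof derives a contradiction at any $x_0\in\partial C\cap\partial\Omega^\ast$ by combining Lemma~\ref{lem:hopf}, Lemma~\ref{lem:contact-curvature} and Theorem~\ref{thm:contact-optimality}. Thus $\partial\Omega^\ast=\Gamma$, and Theorem~\ref{thm:free-boundary-optimality} yields \eqref{eq:free-boundary-sphere-example} on all of $\partial\Omega^\ast$.

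The main obstacle is this last step. The free boundary theorem requires that two-sided normal variations supported near each point of $\Gamma$ stay inside $\mathcal A_C(M)$. Since the $RC$-GNP constants are fixed, a minimizer could saturate the bounds \eqref{eq:uniform-C2alpha}--\eqref{eq:uniform-gradient} or the tubular radius, and then variations would be one-sided. I would first try to argue that small compactly supported perturbations preserve the constraints whenever they are not saturated. Failing that, I would state the conclusion at those regular points of $\partial\Omega^\ast$ where the constraints are inactive, in keeping with the "regular point" qualifier of Theorem~\ref{thm:free-boundary-optimality}.
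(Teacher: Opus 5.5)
Your proposal follows the paper's proof: by \eqref{eq:boundary-gradient-sphere} and \eqref{eq:mean-curvature-sphere}, the hypothesis \eqref{eq:explicit-condition-sphere} is exactly \eqref{eq:sufficient-condition-C} on $\partial C$, and the conclusion is then read off from Theorem~\ref{thm:sufficient-condition}, with your extra checks ($C\in\mathcal A_C(M)$, radiality of $u_C$, exclusion of contact points before applying Theorem~\ref{thm:free-boundary-optimality}) making explicit what the paper leaves tacit. The obstacle you flag is genuine and the paper does not address it. It equally affects the one-sided variations behind Theorem~\ref{thm:contact-optimality}, and it is not hypothetical: for $q\notin\overline C$ one has $\mathcal J_g(M\setminus\overline{B_g(q,\varepsilon)})\to-\infty$ as $\varepsilon\to0$, so only the fixed RC-GNP bounds keep the infimum finite. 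Still, citing these theorems as stated, as the paper does, makes your reduction exactly as complete as the paper's own.
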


\begin{proof}
By \eqref{eq:boundary-gradient-square-sphere},
\[
|\nabla_g u_C|_g^2
=
\left[
\frac{
\displaystyle
\int_0^\rho
\left(R\sin(s/R)\right)^{n-1}
F(s)\,ds
}{
\left(R\sin(\rho/R)\right)^{n-1}
}
\right]^2
\]
on $\partial C$.

On the other hand, by
\eqref{eq:mean-curvature-sphere},
\[
H_{\partial C}
=
\frac{n-1}{R}
\cot\left(\frac{\rho}{R}\right).
\]

Therefore, condition
\eqref{eq:explicit-condition-sphere} is exactly
\[
|\nabla_g u_C|_g^2
>
\sigma H_{\partial C}+k^2
\qquad\text{on }\partial C.
\]

The conclusion follows directly from
Theorem~\ref{thm:sufficient-condition}.
\end{proof}

\subsection{A constant radial source}
\label{subsec:constant-source-sphere}

We choose the radial profile in the form
\[
F(r)=f_0\,\eta(r),
\]
where $f_0>0$ is a constant and
\[
\eta\in C^\infty([0,\rho]),
\qquad
0\leq\eta\leq1,
\]
is a smooth cut-off function satisfying
\[
\eta(r)=1
\quad\text{for }0\leq r\leq r_0,
\]
and
\[
\eta(r)=0
\quad\text{for }r\geq r_1,
\]
where
\[
0<r_0<r_1<\rho.
\]
Consequently,
\[
\operatorname{supp}F\subset[0,r_1],
\]
and, for the corresponding radial function
\[
f(x)=F\bigl(d_g(p,x)\bigr),
\]
we have
\[
\operatorname{supp}f
\subset\overline{B_g(p,r_1)}
\subset C.
\]
Thus there exists a compact set
\[
K:=\overline{B_g(p,r_1)}\subset C
\]
such that
\[
\operatorname{supp}f\subset K,
\]
Then \eqref{eq:radial-U-prime} becomes
\begin{equation}
U'(r)
=
-
f_0
\frac{
\displaystyle
\int_0^r
\left(R\sin(s/R)\right)^{n-1}\eta(s)\,ds
}{
\left(R\sin(r/R)\right)^{n-1}
}.
\label{eq:constant-source-Uprime}
\end{equation}

Consequently,
\begin{equation}
|\nabla_g u_C|_g^2
=
f_0^2
\left[
\frac{
\displaystyle
\int_0^\rho
\left(R\sin(s/R)\right)^{n-1}\eta(s)\,ds
}{
\left(R\sin(\rho/R)\right)^{n-1}
}
\right]^2
\qquad\text{on }\partial C.
\label{eq:constant-source-gradient}
\end{equation}

Hence a sufficient condition for the free boundary problem is
\begin{equation}
\boxed{
f_0^2
\left[
\frac{
\displaystyle
\int_0^\rho
\left(R\sin(s/R)\right)^{n-1}\eta(s)\,ds
}{
\left(R\sin(\rho/R)\right)^{n-1}
}
\right]^2
>
\frac{\sigma(n-1)}{R}
\cot\left(\frac{\rho}{R}\right)
+
k^2.
}
\label{eq:constant-source-condition}
\end{equation}


\subsection{The one-dimensional sphere $S_R^1$}
\label{subsec:S1-example}

For completeness, we briefly discuss the one-dimensional case.

Let
\[
S_R^1
=
\left\{
(R\cos\theta,R\sin\theta):
\theta\in[0,2\pi)
\right\}.
\]

The induced metric is
\begin{equation}
g=R^2\,d\theta^2,
\label{eq:S1-metric}
\end{equation}
and the Riemannian volume element is
\begin{equation}
dV_g=R\,d\theta.
\label{eq:S1-volume}
\end{equation}

For a smooth function $u=u(\theta)$, we have
\begin{equation}
\nabla_g u
=
\frac{1}{R^2}
\frac{du}{d\theta}
\frac{\partial}{\partial\theta},
\label{eq:S1-gradient}
\end{equation}
and therefore
\begin{equation}
|\nabla_g u|_g^2
=
\frac{1}{R^2}
\left|\frac{du}{d\theta}\right|^2.
\label{eq:S1-gradient-norm}
\end{equation}

The Laplace--Beltrami operator is
\begin{equation}
\Delta_g u
=
\frac{1}{R^2}
\frac{d^2u}{d\theta^2}.
\label{eq:S1-laplacian}
\end{equation}

Thus, on an interval of $S_R^1$, the Dirichlet problem
\[
-\Delta_g u=f
\]
takes the form
\begin{equation}
-\frac{1}{R^2}u''(\theta)
=
f(\theta).
\label{eq:S1-dirichlet}
\end{equation}

The case $S_R^1$ is useful for checking explicitly the scaling of the
Riemannian gradient and Laplace--Beltrami operator with respect to the
radius $R$. However, since the boundary of a one-dimensional domain is
zero-dimensional, the mean-curvature formulation used in the free
boundary condition is naturally considered for $n\geq2$.

\subsection{Conclusion}
\label{subsec:examples-conclusion}

The preceding examples show that the sufficient condition
\[
|\nabla_g u_C|_g^2
>
\sigma H_{\partial C}+k^2
\]
can be made explicit in geometrically relevant Riemannian settings.

In particular, for a geodesic ball on the sphere $S_R^n$ and a radial
source term, the condition reduces to the explicit inequality
\[
\left[
\frac{
\displaystyle
\int_0^\rho
\left(R\sin(s/R)\right)^{n-1}
F(s)\,ds
}{
\left(R\sin(\rho/R)\right)^{n-1}
}
\right]^2
>
\frac{\sigma(n-1)}{R}
\cot\left(\frac{\rho}{R}\right)
+
k^2.
\]

This provides a concrete class of examples for which the abstract
existence criterion of Theorem~\ref{thm:sufficient-condition} can be
verified directly.


\begin{thebibliography}{99} 

\bibitem{BLS}
Barkatou, M., Seck, D., and Ly, I. (2005). An existence result for a quadrature surface free boundary problem. Open Mathematics, 3(1):39--57.
\bibitem{BLS2}
Barkatou, Mohammed and Seck, Diaraf and Ly, Idrissa. (2006). An existence result for an interior electromagnetic casting problem. Central European Journal of Mathematics, 4(4):573--584.


\bibitem{Da1}
Dambrine, M. (2002). On variations of the shape hessian and sufficient conditions for the stability of critical shapes. Racsam, 96:95--121.
\bibitem{DZ}
Delfour, M. C. and Zol{\'e}sio, J.-P. (2011). Shapes and geometries: metrics, analysis, differential calculus,
  and optimization. SIAM.
\bibitem{DS1}
Djit{\'e}, A. S. and Seck, D. (2022). A riemannian point of view for a quadrature surface free boundary
  problem. Nonlinear Analysis, Geometry and Applications: Proceedings of
  the Second NLAGA-BIRS Symposium, Cap Skirring, Senegal, January 25--30,
  2022, pages 339--374. Springer.
\bibitem{He}
Henrot, A. (1994). Subsolutions and supersolutions in a free boundary problem.
Arkiv f{\"o}r Matematik, 32(1):79-98.

\bibitem{HP}
Henrot, A. and Pierre, M. (2018). Shape variation and optimization. European Mathematical Society.

\bibitem{Schu}
Schulz, V. H. (2014). A riemannian view on shape optimization. Foundations of Computational Mathematics, 14:483--501.

\bibitem{sec}
Seck, D. (2016). On an isoperimetric inequality and various methods for the bernoulli’s free boundary problems. S{\~a}o Paulo Journal of Mathematical Sciences, 10:36--59.

\bibitem{sozo}
 Sokolowski, J., and Zolesio, J.-P. (1992).
Introduction to shape optimization. Springer, Berlin, Heidelberg.

\end{thebibliography}
\end{document}